\newcommand{\banacha}{\mathbb X}
\newcommand{\banachb}{\mathbb Y}
\newcommand{\banachc}{\mathbb Z}
\newtheorem{theorem}{Theorem}
\newtheorem{lemma}[theorem]{Lemma}
\newtheorem{definition}{Definition}
\newtheorem{corollary}[theorem]{Corollary}
\newtheorem{proposition}[theorem]{Proposition}
\newtheorem{remark}{Remark}
\begin{document} 
\title{ Kantorovich's theorem on Newton's method for solving \\ strongly regular generalized equation}

\author{ 
O.  P. Ferreira\thanks{IME/UFG,  CP-131, CEP 74001-970 - Goi\^ania, GO, Brazil (Email: {\tt
      orizon@ufg.br}). The author was supported in part by  CNPq Grants   305158/2014-7, PRONEX--Optimization(FAPERJ/CNPq) and FAPEG/GO.} 
\and 
G. N. Silva \thanks{CCET/UFOB,  CEP 47808-021 - Barreiras, BA, Brazil (Email: {\tt  gilson.silva@ufob.edu.br}). The author was supported in part by CAPES .}    
}
\date{April 05, 2016}  
\maketitle
\begin{abstract}
In this paper we consider the Newton's method for solving the generalized equation of the form
$
  f(x) +F(x) \ni 0, 
$
where $f:{\Omega}\to Y$ is a   continuously   differentiable mapping, $X$  and $Y$ are Banach spaces, $\Omega\subseteq X$ an open set and $F:X \rightrightarrows  Y$ be a set-valued mapping with  nonempty  closed graph.   We show that, under strong regularity of the generalized equation,  concept introduced by S.~M.~Robinson in  \cite{Robinson1980}, and starting point satisfying the  Kantorovich's assumptions,  the Newton's method is quadratically convergent to a solution, which is unique in a suitable neighborhood  of the starting point.  The analysis presented   based on Banach Perturbation Lemma for generalized equation and the majorant technique, allow  to  unify some results  pertaining the Newton's method theory.\\

\noindent
{\bf Keywords:} Generalized equation. Newton's method. majorant condition. semi-local convergence.

\end{abstract}
\section{Introduction}\label{sec:int}
In this paper we consider the Newton's method for solving the generalized equation of the form
\begin{equation} \label{eq:ipi}
  f(x) +F(x) \ni 0, 
\end{equation}
where $f:{\Omega}\to \banachb$ is a continuously differentiable mapping,  $\banacha$ and $\banachb$ are Banach spaces,  $ \Omega\subseteq \banacha$ is an open set and $F:\banacha \rightrightarrows  \banachb$ be a set-valued mapping with closed nonempty graph. As is well known, the generalized equation \eqref{eq:ipi} is an abstract model for various problems in classical analysis and its applications.  For instance, if  $\banacha=\mathbb{R}^{n}$, $\banachb=\mathbb{R}^{p+q}$ and $F=\mathbb{R}^{p}_{-}\times \{0\}$ is the product of the nonpositive orthant in $\mathbb{R}^{p}$ with the origin in  $\mathbb{R}^{q}$, then the inclusion \eqref{eq:ipi} describes a system of equalities and inequalities. If $F$ is the normal cone mapping $N_C$ of a convex set $C$ in $\banachb$, then  the inclusion \eqref{eq:ipi} is the variational inequality problem, which covers wide range of problems in mathematical programming.  Additional comments about generalized equations can be found  in \cite{DontchevAragon2014, DontchevAragon2011, Dontchev1996, DontchevRockafellar2009, DontchevRockafellar2010, DontchevRockafellar2013, Ferreira2015, josephy1979, Robinson1972_2} and the references cited therein. 

Newton's method  to solve \eqref{eq:ipi} formally generates a sequence,  for  an  initial point $x_0$,  as follows
\begin{equation} \label{eq:ipi1}
  f(x_k) + f'(x_k)(x_{k+1}-x_k)+ F(x_{k+1}) \ni 0, \qquad k=0,1, \ldots.
\end{equation}
This method may be viewed as a Newton-type method based on a partial linearization, which has been studied in several papers  including \cite{DontchevAragon2014, DontchevAragon2011, Dontchev1996, DontchevRockafellar2013};  see also \cite[ Section 6C]{DontchevRockafellar2009}. When $F\equiv 0$, the iteration \eqref{eq:ipi1} becomes the standard Newton's method for solving the nonlinear equation $f(x)=0.$  If $\banacha=\mathbb{R}^{n}$, $\banachb=\mathbb{R}^{m}$ and $F=\mathbb{R}^{s}_{-}\times \{0\}^{m-s}$, then \eqref{eq:ipi1} is a Newton's method for solving a system of equalities and inequalities; see \cite{Daniel1973}.  Now,  if $F$ is the normal cone mapping $N_C$,  of a convex set $C$ in $\banachb$ and $\banachb=\banacha^*$,  then   \eqref{eq:ipi1} is the known version of the Newton's method for solving  variational inequality;  see \cite{DokovDontchev1998, josephy1979}. In particular, if \eqref{eq:ipi} represents the Karush-Kuhn-Tucker  optimality conditions  for a mathematical programming problem, then the procedure \eqref{eq:ipi1} describes the well-known   sequential quadratic programming method; see for example  \cite[pag. 334]{DontchevRockafellar2009}. 
 
L. ~V. Kantorovich in  \cite{Kantorovich1948}, see also \cite{KantorovichAkilov1977, Polyak2007}, was the first to prove a convergence result for  Newton's method for solving  the equation $f(x)=0$,   where $f:{\Omega}\to \banachb$ is a continuously differentiable mapping,  $\banacha$ and $\banachb$ are Banach spaces and   $ \Omega\subseteq \banacha$ is an open set. Using conditions on $x_0$ the starting point, namely, under the condition that  $f'(x_0)^{-1}$ exists and $\|f'(x_0)^{-1}f(x_0)\|$ is bounded,  L.V. Kantorovich obtained well definition of the method, quadratic  convergence  and uniqueness of solution.  The idea employed in the  proof of convergence was the technique of majorization, which consists in bound the Newton's sequence by a  scalars sequence. This technique has been used and extended for various researchers, including  \cite{cibulka2015, Ferreira2015, FerreiraSilva, FerreiraSvaiter2009, Gutierrez2000, Potra2005, Wang1999, Wang2015}.  S.~M. Robinson in \cite{Robinson1972_2}, using the idea of convex process introduced by Rockafellar \cite{Rockafellar1970}, see also  \cite{Robinson1972_1, Rockafellar1967}, established a generalization of the Kantorovich's theorem for solving the inclusion $f(x)\in C$,    where $f:{\Omega}\to \banachb$ is a continuously differentiable mapping,  $\banacha$ and $\banachb$ are Banach spaces,  $ \Omega\subseteq \banacha$ is an open set and $C \subseteq \banachb$ is a nonempty closed and convex cone.  The paper \cite{Robinson1972_2} has been extended for various authors, see for instance \cite{cibulka2015, Ferreira2015, FerreiraSilva, LiNg2012}. In his Ph.D. thesis, N. H. Josephy in \cite{josephy1979} studied  Newton's method for solving the variational inequality $f(x)+N_C\ni 0$,  where $f:{\Omega}\to \mathbb{R}^{m}$ is a continuously differentiable mapping,  $ \Omega\subseteq \mathbb{R}^{n}$ is an open set and  $N_C$ is the normal cone mapping of a convex set $C\subset \mathbb{R}^{m}$. For guarantee  the well definition of the method,  {\it strong regularity} property  on   $f(x)+N_C$,   concept introduced in the theory of generalized equations by S.M. Robinson in \cite{Robinson1980}, was used.   If $\banacha = \banachb$ and $N_C=\{ 0\},$ then strong regularity  is equivalent to    $f'(x)^{-1}$  be a continuous linear operator.  If  $\banacha=\mathbb{R}^{n}$, $\banachb=\mathbb{R}^{m}$ and $F=\mathbb{R}^{s}_{-}\times \{0\}^{m-s}$, then  strong regularity is equivalent to   Mangasarian-Fromovitz constraint qualification; see \cite[Example 4D.3]{DontchevRockafellar2009}.  An important case is when \eqref{eq:ipi} represents the  Karush-Kuhn-Tucker's  systems for  the standard nonlinear programming problem with a strict local minimizer, see  \cite{DontchevRockafellar2009} pag. 232. In this case,  the strong regularity of this system is equivalent to  the  linear independence of the gradients of the active constraints and the strong second-order sufficient optimality condition; see  \cite[Theorem 6]{DontchevRockafellar96}.

The usual hypotheses used to obtain quadratic convergence of Newton's method \eqref{eq:ipi1},   for solving equation \eqref{eq:ipi},  is  the Lipschitz continuity of  $f'$  in a neighborhood  of an initial point; see  \cite{cibulka2015, DokovDontchev1998, Dontchev1996, Ferreira2015, FerreiraSilva, FerreiraSvaiter2009, josephy1979}.  Indeed,   keeping control of the derivative is an important point in the convergence analysis of Newton's method.  On the other hand, a couple of papers have dealt with the issue of convergence analysis of the Newton's method   by relaxing the assumption of Lipschitz continuity of $f'$, see for example \cite{FerreiraSvaiter2009, Wang1999, Zabrejko1987}, actually all this conditions 
are equivalent to X. Wang's condition  introduced in \cite{Wang1999}. The advantage of working with a {\it majorant condition} relaxing the assumption of Lipschitz continuity of $f'$ rests in the fact that it allow to unify several convergece results pertaining to  Newton's method; see  \cite{FerreiraSvaiter2009, Wang1999}.  In this paper we rephrase  the majorant  condition introduced in \cite{FerreiraSvaiter2009}, in order to study the convergence properties of Newton's method \eqref{eq:ipi1}. The analysis presented provides a clear relationship between the majorant function and the function defining generalized equation \eqref{eq:ipi}. Also, it allows us to obtain the  convergence radius for  the method, bound for its convergence rates  with respect to the majorant condition and uniqueness of solution.  The convergence analysis of the Newton's method under Lipschitz's  and Smale's conditions,  are provided  as special case. Up to our knowledge, this is the first time that the Newton's method to solving  generalized equations under Smale's condition in the starting point  is analyzed.  In addition, it is worth mentioning that  the recent  approach for analyzing  semi-local convergence of Newton's method and its variants,   for solving generalized equation,  use  contraction mapping principle for set-valued mappings, see  \cite{cibulka2015, DokovDontchev1998, Dontchev1996},  while our  approach is based in the  Banach Perturbation Lemma. In this sense,  our approach is related to the techniques  used in  \cite{Dontchev2015, DokovDontchev1998, josephy1979}.

The organization of the paper is as follows. In Section~\ref{sec:int.1},  some notations and important results  used  throughout  the paper are presented. In Section \ref{lkant}, the main result is stated and  in Section~\ref{sec:PR} properties of the majorant function, the main  relationships between the majorant function and the nonlinear operator are established. In Section~\ref{eq.convergenceanalysis} the main result is proved and  the uniqueness of the solution and some applications of this result are given in Section~\ref{sec:scinmer}. Some final remarks are made in  Section~\ref{sec:fr}.

\section{Preliminaries} \label{sec:int.1}
The following notations and results are used throughout our presentation.  We beginning  with the following  elementary convex analysis result:
\begin{proposition} \label{pr:conv.aux1}
Let $I\subset \mathbb{R}$ be an interval and $\varphi:I\to \mathbb{R}$ be a convex function. If $s,t,r\in I$, $s<r$, and $s\leqslant t\leqslant r$ then $\varphi(t)-\varphi(s) \leqslant \left[\varphi(r)-\varphi(s)\right][(t-s)/(r-s)].$  Moreover,  if  $\varphi$  is continuously differentiable then $\varphi':I\to \mathbb{R}$ is increasing and,  for any $s_0\in \mathrm{int}(I)$,  there holds
$$
\varphi'(s_0):={\lim}_{s\to s_0 ^-} \; \frac{\varphi(s_0)-\varphi(s)}{s_0-s}
={\sup}_{s<s_0} \;\frac{\varphi(s_0)-\varphi(s)}{s_0-s}.
$$
\end{proposition}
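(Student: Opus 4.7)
The plan is to handle the three assertions in turn: the chord inequality, the sup characterization of $\varphi'$, and the monotonicity of $\varphi'$. The whole argument is elementary and rests on rewriting the midpoint as a convex combination and then iterating the resulting three-chord inequality.

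First I would prove the chord inequality by writing $t = (1-\lambda)s + \lambda r$ with $\lambda = (t-s)/(r-s) \in [0,1]$. Convexity of $\varphi$ gives $\varphi(t) \leqslant (1-\lambda)\varphi(s) + \lambda\varphi(r)$, and subtracting $\varphi(s)$ from both sides yields exactly $\varphi(t)-\varphi(s) \leqslant [\varphi(r)-\varphi(s)](t-s)/(r-s)$. Rearranging this for $s<t<r$ also yields the equivalent three-chord inequality
\[
\frac{\varphi(t)-\varphi(s)}{t-s} \;\leqslant\; \frac{\varphi(r)-\varphi(s)}{r-s} \;\leqslant\; \frac{\varphi(r)-\varphi(t)}{r-t},
\]
which I would record and use repeatedly in what follows.

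Next I would fix $s_0 \in \mathrm{int}(I)$ and consider the function $\psi(s) := [\varphi(s_0)-\varphi(s)]/(s_0-s)$ for $s<s_0$. The first half of the three-chord inequality, applied to $s<s'<s_0$, shows that $\psi(s)\leqslant \psi(s')$, i.e., $\psi$ is nondecreasing on $(-\infty,s_0)\cap I$. Hence $\lim_{s\to s_0^-}\psi(s)=\sup_{s<s_0}\psi(s)$. If moreover $\varphi$ is continuously differentiable at $s_0$, the limit on the left coincides with $\varphi'(s_0)$ by the very definition of derivative, which delivers the displayed identity in the statement.

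Finally, for the monotonicity of $\varphi'$ on $I$, I would use a symmetric counterpart of the above: the second half of the three-chord inequality shows that $t\mapsto[\varphi(t)-\varphi(s_0)]/(t-s_0)$ is nondecreasing for $t>s_0$, so in the differentiable case $\varphi'(s_0) = \inf_{t>s_0}[\varphi(t)-\varphi(s_0)]/(t-s_0)$. Combining the sup and inf characterizations, for any $s_1<s_2$ in $\mathrm{int}(I)$ one obtains
\[
\varphi'(s_1) \;\leqslant\; \frac{\varphi(s_2)-\varphi(s_1)}{s_2-s_1} \;\leqslant\; \varphi'(s_2),
\]
which yields monotonicity. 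There is no real obstacle; the only point requiring care is the direction of the inequalities in the three-chord lemma, and making sure the one-sided limit argument does not use differentiability until it is needed, so the sup characterization remains valid for general convex $\varphi$ with one-sided derivatives.
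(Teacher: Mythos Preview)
Your proof is correct and entirely standard. Note that the paper does not actually prove this proposition: it is stated there as an ``elementary convex analysis result'' without proof, so there is no paper argument to compare against. Your derivation via the three-chord inequality is the canonical one; the only cosmetic point is that your monotonicity argument for $\varphi'$ is carried out on $\mathrm{int}(I)$, but continuity of $\varphi'$ immediately extends it to all of $I$.
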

Let $\banacha$,\, $\banachb$ be Banach spaces,  the {\it open} and {\it closed balls} at $x$ with radius $\delta\geq 0$ are denoted, respectively, by $ B(x,\delta) = \{ y\in X ~ : ~\|x-y\|<\delta \}$ and  $B[x,\delta] = \{ y\in X ~ : ~\|x-y\|\leqslant \delta\}.$ We denote by${\mathscr L}(\banacha,\banachb)$ the {\it space consisting of all continuous linear mappings} $A:\banacha \to \banachb$ and   the {\it  norm}  of $A$ is defined  by $  \|A\|:=\sup \; \{ \|A x\|~:  \|x\| \leqslant 1 \}.$ Let  $\Omega\subseteq \banacha$ and    $h:{\Omega}\to \banachb$  a function with  Fr\'echet derivative at  all $x\in int(\Omega)$.  The Fr\'echet derivative of $h$ at $x$ is the linear mapping $h'(x):\banacha \to \banachb$ which is continuous. We identify as the {\it graph} of the set-valued mapping $H:\banacha \rightrightarrows  \banachb$ the set $\mbox{gph}~H:= \left\{(x,y)\in \banacha \times \banachb ~: ~ y \in H(x)\right\}.$ The {\it domain} and the {\it range} of $H$ are, respectively,  the sets $ \mbox{dom}~H=\{x\in \banacha ~: ~ H(x)\neq \varnothing\} $ and $ \mbox{rge}~H=\{y\in \banachb ~: ~ y \in H(x) ~\emph{for some} ~x\}$. The {\it inverse} of $H$  is the set-valued mapping  $H^{-1}:\banachb \rightrightarrows  \banacha$ defined by $ H^{-1}(y)=\{x \in \banacha ~: ~ y \in H(x)\}$.
\begin{definition}\label{def:pplm}
Let $\banacha$,\, $\banachb$ be Banach spaces,    $\Omega$ be an open nonempty subset of $\banacha$,  $h: \Omega \to \banachb$ be   a Fr\'echet differentiable  with derivative  $h'$ and $H:\banacha \rightrightarrows  \banachb$ be a set-valued mapping. The {\it partial linearization} of the mapping   $h +H$  at   $x\in \banacha$  is   the set-valued mapping   $L_h(x, \cdot ):\banacha \rightrightarrows  \banachb$ given by 
\begin{equation} \label{eq:pplm}
L_h(x, y ):=h(x)+h'(x)(y-x)+H(y).
\end{equation}
For each $x\in \banacha$, the inverse   $L_h(x, \cdot )^{-1}:\banachb \rightrightarrows  \banacha$ of the  mapping $L_h(x, \cdot )$  at $z\in Y$ is denoted  by
\begin{equation} \label{eq:invplm}
L_h(x, z )^{-1}:=\left\{y\in X ~:~ z\in h(x)+h'(x)(y-x)+H(y)\right\}.
\end{equation}
\end{definition}
\begin{remark}
If in the above  definition we have $H\equiv {0}$, $z=0$ and $h'(x)$ invertible, then the inverse mapping   $x \mapsto L_h(x, 0 )^{-1}=x-h'(x)^{-1}h(x)$ is the well known Newton's iteration mapping  for solving the equation $h(x)=0$. 
\end{remark}
An important element in the analysis of Newton's method for solving the equation $f(x)=0,$ is the behavior of inverse $f'(x)^{-1}$ for $x$ in a neighborhood of an initial point. The analogous element for  the generalized equation \eqref{eq:ipi} is the behavior  of the set-valued mapping  $L_f(x, 0 )^{-1}$,  for $x$ in a neighborhood of an initial point. It  is worth point out that,  N. H.  Josephy  in \cite{josephy1979} was the first to consider  Newton's method for solving the generalize equation $f(x)+N_{C}(x) \ni 0$, where $C$ is the normal cone of a convex set $C\subset \mathbb{R}^n$, by defining the Newton's iteration as   $L_f(x_k, ~ 0 )^{-1}\ni x_{k+1}$ for $k=0, 1, \ldots$, which is equivalent to  \eqref{eq:ipi1},  to the particular  case  $F=N_{C}$.    N. H.  Josephy  in  \cite{josephy1979}, for analyzing  Newton's method,   employed the  important concept  of strong regularity defined by S.M. Robinson \cite{Robinson1980}, which  assuring {\it ``good  behavior"} of $L_f(x, 0 )^{-1}$ for $x$ in a suitable  neighborhood of an initial point $x_0$. Here we adopt the following definition   due to Robinson  given in \cite{Robinson1980}.
\begin{definition}\label{eq:stronmetr}
Let $\banacha$,\, $\banachb$ be Banach spaces,    $\Omega$ be an open nonempty subset of $\banacha$,  $h: \Omega \to \banachb$ be   Fr\'echet differentiable  with derivative  $h'$ and $H:\banacha \rightrightarrows  \banachb$ be a set-valued mapping.  The mapping $h+H$ is said to be strongly regular at $x$ for $y$,  when $y \in h(x) +H(x)$ and there exist  constants  $r_x>0$, $r_y>0$ and  $\lambda >0$ such that $B(x,r_x)\subset \Omega$, the mapping $ z\mapsto L_h(x, z )^{-1}\cap B(x,r_x)$ is a single-valued from the ball $B(y,r_y)$ to  $B(x,r_x)$,  which is Lipschitizian on $B(y,r_y)$  with modulus  $\lambda$, i.e., 
$$
\left\|L_h(x,u)^{-1}\cap B(x,r_x)- L_h(x,v)^{-1}\cap B(x,r_x) \right\| \leq \lambda \|u-v\|, \qquad \forall ~ u, v \in  B(y,r_y). 
$$
In this case, we refer to   $\lambda$ as the Lipschtiz constant. 
\end{definition}

Since the mapping $ z\mapsto L_f(x,z)^{-1}\cap B(x_1, r_{x_1})$ is   single-valued  from $B(0, r_{0})$ to  $ B(x_1, r_{x_1})$, for simplify the notation we are using in above definition  $w= L_f(x,0)^{-1}\cap B(x_1, r_{x_1})$ instead of $\{w\}:= L_f(x,0)^{-1}\cap B(x_1, r_{x_1}).$ {\it From now on we will use  this simplified notation. }

\begin{remark} \label{re:rxinf}
If $H(x)\equiv \{0\}$ then  the property of $h+H\equiv h$ be strongly  regular at $x$ for $y$,  reduces to $h'(x)$ has an  inverse $h'(x)^{-1}$. Moreover,  in this case, the strongly regular radii  associated to $h+H$ at $x$ for $y$ are given  by   $r_x=+\infty$ and  $r_y=+\infty$, respectively,  and  the Lipschitz constant is   $\lambda=\|h'(x)^{-1}\|$.
\end{remark}
 For a  detailed discussion about  the Definition~\ref{eq:stronmetr};  see \cite{DontchevRockafellar2009, Robinson1980}.  The next result is a type of implicit function theorem for generalized equations satisfying the strongly regular condition and its proof is an immediate consequence of  \cite[Theorem 5F.4]{DontchevRockafellar2009} on page 294; see also \cite[Theorem 2.1] {Robinson1980}.
\begin{theorem}\label{eq:Implitheor}
 Let $\banacha$,  $\banachb$ and  $\banachc$ be Banach spaces, $G:\banacha \rightrightarrows  \banachb$ be  a set-valued mapping and $g:\banachc \times\banacha \to \banachb$  be a  continuous   function,   having  partial Fr\'echet derivative  with respect the second variable  $D_{x}g$  on $\banachc\times\banacha$, which is also continuous. Let $\bar{p} \in \banachc$ and suppose  that  $\bar x$ solves the generalized equation
\begin{equation} \label{eq;geift}
g(\bar p,x)+G(x) \ni 0.
\end{equation}
Assume that the  mapping  $g(\bar{p}, .)+G$ is strongly regular at $\bar{x}$ for $0$, with associated Lipschitz constant  $\lambda$. Then, for any $\epsilon >0$  there exist $r_{\bar p}>0$ and  $r_{\bar x}>0$, which depend of $\epsilon$,    and a single-valued mapping $s:B(\bar p, r_{\bar p}) \to B(\bar{x}, r_{\bar x})$ such that  for any $p\in B(\bar p, r_{\bar p})$, $s(p)$ is the unique solution in $B(\bar x, r_{\bar p})$ of the inclusion 
$
g(p,x)+G(x) \ni 0, 
$
and $s(\bar p)=\bar x$. Moreover, there holds
$$
\|s(p')-s(p)\| \leq (\lambda + \epsilon)  \|g(p',s(p))- g(p,s(p))\|,  \qquad \forall~ p,p' \in B(\bar p, r_{\bar p}).
$$
\end{theorem}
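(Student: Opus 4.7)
The plan is to recast the perturbed inclusion $g(p,x)+G(x)\ni 0$ as a fixed-point equation driven by the strongly regular mapping at $(\bar p,\bar x)$, and then apply the Banach contraction principle. Set $h(x):=g(\bar p,x)$, so that $h'(\bar x)=D_xg(\bar p,\bar x)$. A point $y$ solves $g(p,y)+G(y)\ni 0$ if and only if
\[
h(\bar x)+h'(\bar x)(y-\bar x)+G(y)\;\ni\;z_p(y),\qquad z_p(y):=h(\bar x)+h'(\bar x)(y-\bar x)-g(p,y),
\]
i.e., $y\in L_h(\bar x,z_p(y))^{-1}$ in the notation of Definition~\ref{def:pplm}. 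Strong regularity of $g(\bar p,\cdot)+G$ at $\bar x$ for $0$ supplies a single-valued $\lambda$-Lipschitz selection $z\mapsto L_h(\bar x,z)^{-1}\cap B(\bar x,\rho_0)$ defined on some ball $B(0,r_0)$; fixed points of
\[
T_p(y)\;:=\;L_h(\bar x,z_p(y))^{-1}\cap B(\bar x,\rho_0)
\]
are therefore exactly the local solutions we seek.

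To make $T_p$ a contraction I would use the integral identity
\[
z_p(y_1)-z_p(y_2)=\int_0^1\bigl[D_xg(\bar p,\bar x)-D_xg(p,y_2+t(y_1-y_2))\bigr](y_1-y_2)\,dt.
\]
By continuity of $D_xg$ at $(\bar p,\bar x)$, I shrink the radii to $(r_{\bar p},r_{\bar x})$ with $r_{\bar x}\leq\rho_0$ so that the integrand has norm at most $\delta:=\epsilon/[\lambda(\lambda+\epsilon)]$ on $B(\bar p,r_{\bar p})\times B(\bar x,r_{\bar x})$, and so that $z_p(y)\in B(0,r_0)$ throughout. Then $\|z_p(y_1)-z_p(y_2)\|\leq\delta\|y_1-y_2\|$ and hence $\|T_p(y_1)-T_p(y_2)\|\leq\lambda\delta\|y_1-y_2\|$, strictly less than $\|y_1-y_2\|$. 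Since $z_{\bar p}(\bar x)=0$ gives $T_{\bar p}(\bar x)=\bar x$, continuity of $g(\cdot,\bar x)$ at $\bar p$ lets me shrink $r_{\bar p}$ further so that $\|T_p(\bar x)-\bar x\|\leq\lambda\|g(\bar p,\bar x)-g(p,\bar x)\|\leq(1-\lambda\delta)\,r_{\bar x}$ for all $p\in B(\bar p,r_{\bar p})$. Consequently $T_p$ maps $B[\bar x,r_{\bar x}]$ into itself, and Banach's fixed-point theorem produces a unique fixed point $s(p)\in B[\bar x,r_{\bar x}]$; any other solution in $B(\bar x,r_{\bar x})$ would yield $z_p(y)\in B(0,r_0)$ and thus a fixed point of $T_p$, so $s(p)$ is the unique local solution, with $s(\bar p)=\bar x$.

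For the Lipschitz-like estimate, set $y=s(p)$ and $y'=s(p')$ and split
\[
\|y'-y\|\leq\|T_{p'}(y')-T_{p'}(y)\|+\|T_{p'}(y)-T_p(y)\|.
\]
The first term is at most $\lambda\delta\|y'-y\|$. Since $z_{p'}(y)-z_p(y)=g(p,y)-g(p',y)$, the second is at most $\lambda\|g(p',y)-g(p,y)\|$. Rearranging yields
\[
\|y'-y\|\leq\frac{\lambda}{1-\lambda\delta}\,\|g(p',s(p))-g(p,s(p))\|,
\]
and the choice $\delta=\epsilon/[\lambda(\lambda+\epsilon)]$ gives $\lambda/(1-\lambda\delta)=\lambda+\epsilon$, delivering the stated bound. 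The main obstacle is the simultaneous bookkeeping of $(r_{\bar p},r_{\bar x})$: they must be small enough that $z_p(y)\in B(0,r_0)$ (so $T_p$ lands in the single-valued range), that $\lambda\delta$ meets the precise margin dictated by $\epsilon$, and that the self-map condition of Banach's theorem holds. This is straightforward continuity bookkeeping but genuinely needs both joint continuity of $D_xg$ and continuity of $g$ in the $p$ variable, which the hypothesis supplies.
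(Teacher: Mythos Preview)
Your proposal is correct and essentially follows the classical Robinson/Dontchev--Rockafellar route: rewrite the perturbed inclusion as a fixed-point problem for the single-valued Lipschitz selection supplied by strong regularity, and apply the Banach contraction principle with carefully chosen radii. The calculation $\lambda/(1-\lambda\delta)=\lambda+\epsilon$ for $\delta=\epsilon/[\lambda(\lambda+\epsilon)]$ is exactly what is needed, and the self-map and uniqueness bookkeeping is handled correctly.

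There is nothing substantive to compare against, however, because the paper does \emph{not} give its own proof of this theorem: it simply states that the result ``is an immediate consequence of \cite[Theorem~5F.4]{DontchevRockafellar2009}'' (see also \cite[Theorem~2.1]{Robinson1980}) and moves on. Your argument is in fact the standard proof behind those cited results, so you have supplied what the paper omits. One minor point worth tightening: when you invoke uniqueness, you assert that any local solution $y$ has $z_p(y)\in B(0,r_0)$; this does follow from your continuity bookkeeping, but you should make explicit that the radii $(r_{\bar p},r_{\bar x})$ are chosen so that $\|z_p(y)\|<r_0$ for \emph{all} $(p,y)\in B(\bar p,r_{\bar p})\times B[\bar x,r_{\bar x}]$, not merely for the fixed point.
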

 Indeed, the  first version of the Theorem~\ref{eq:Implitheor} was proved by S.M.Robinson; see  \cite[Theorem 2.1]{Robinson1980},   to the particular  case  $F=N_{C}$, where $C$ is the normal cone of a convex set $C\subset \banacha$ and, as an application, a version involving the normal cone of the Banach Perturbation Lemma for linear operator was obtained; see \cite[Theorem~2.4]{Robinson1980}. N. H. Josephy in \cite{josephy1979},  used  this  Banach Pertubation Lemma; see \cite[Corollary~1]{josephy1979}, for  proving that the Newton iteration 
$$
  f(x_k) + f'(x_k)(x_{k+1}-x_k)+ N_{C}(x_{k+1}) \ni 0, \qquad  k=0,1,... 
$$
where $C$ is the normal cone of a convex set $C\subset \mathbb{R}^n$, is well defined and quadratically convergent for a solution of the inclusion  $f(x)+N_{C}(x) \ni 0$. In the next lemma we apply Theorem~\ref{eq:Implitheor} to obtain a version, involving a general set-valued mapping, of the Banach Perturbation Lemma for linear operator. The proof of this result is similar to the correspondent one \cite[Corollary~1]{josephy1979}.
\begin{lemma} \label{lem:blr}
Let $\banacha, \banachb$ be Banach spaces, $a_0$ be a point of $\banachb,$ $G:\banacha \rightrightarrows  \banachb$ be a set-valued mapping and $A_0:\banacha \to \banachb$ be a bounded linear mapping. Suppose that $\bar{x}$ is a point of $\banacha$ which satisfies the generalized equation
$$
0 \in A_0 x + a_0 + G(x).
$$
Assume that the mapping $ A_0  + a_0 + G$ is   strongly  regular at $\bar{x}$ for $0$ with Lipschitz constant $\lambda$. Then,  there exist $r_{\bar x}>0$,  $r_{A_0}>0$, $r_{a_0}>0$ and $r_0>0$  such that,   for any  $A\in B(A_0, r_{A_0}) \subset {\mathscr L}(\banacha,\banachb)$ and  $a\in B(a_0, r_{a_0}) \subset \banachb$  letting   $T(A, a, \cdot): B(\bar{x}, r_{\bar{x}}) \rightrightarrows  \banachb$ be   defined as
$$
T(A, a, x):= Ax + a + G(x),
$$ 
the mapping $ y\mapsto T(A, a, y)^{-1}\cap B(\bar{x}, r_{\bar{x}})$ is a  single-valued  mapping from $B(0, r_0)\subset \banachb$ to $B(\bar{x}, r_{\bar{x}})$.  Moreover,  for each   $A\in B(A_0, r_{A_0})$ and  $a\in B(a_0, r_{a_0})$ there holds  $\lambda \|A-A_0\|<1$  and the mapping $ y\mapsto T(A, a, y)^{-1}\cap B(\bar{x}, r_{\bar{x}})$ is  also  Lipschitzian  on $B(0, r_0)$ as follows  
$$
\left\|T(A, a, y_1)^{-1}\cap B(\bar{x}, r_{\bar{x}}) - T(A, a, y_2)^{-1}\cap B(\bar{x}, r_{\bar{x}}) \right\| \leq \frac{\lambda}{1-\lambda\|A-A_0\|} \|y_1-y_2\|, \qquad \forall ~y_1, y_2 \in  B(0, r_0).
$$
\end{lemma}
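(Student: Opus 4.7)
The strategy is to reduce the perturbed inclusion to a fixed-point equation driven by the Lipschitzian selection produced by strong regularity of $A_0(\cdot)+a_0+G$, and then to combine Banach's contraction principle with a one-line a posteriori Lipschitz estimate to obtain the sharp constant $\lambda/(1-\lambda\|A-A_0\|)$.

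First I would apply Definition~\ref{eq:stronmetr} with $h(x):=A_0 x+a_0$ at $\bar x$ for $0$: since $h+G$ is strongly regular with modulus $\lambda$, there exist radii $\bar r_{\bar x},\bar r_0>0$ and a single-valued Lipschitzian map
$$
S_0:B(0,\bar r_0)\longrightarrow B(\bar x,\bar r_{\bar x}),\qquad S_0(z):=L_h(\bar x,z)^{-1}\cap B(\bar x,\bar r_{\bar x}),
$$
with $\|S_0(z_1)-S_0(z_2)\|\leqslant \lambda\|z_1-z_2\|$ and $S_0(0)=\bar x$. A direct computation from \eqref{eq:invplm} gives $L_h(\bar x,z)^{-1}=\{x:z\in A_0 x+a_0+G(x)\}$, so the equation $y\in Ax+a+G(x)$ is equivalent, for $x\in B(\bar x,\bar r_{\bar x})$ and suitably small data, to the fixed-point equation
$$
x=\Phi_{A,a,y}(x):=S_0\bigl(y-(A-A_0)x-(a-a_0)\bigr).
$$

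Next I would fix the four radii in the right order: choose $r_{\bar x}\in(0,\bar r_{\bar x}]$ first, then $r_{A_0}>0$ small enough that $\lambda r_{A_0}<1$, then $r_{a_0},r_0>0$ small enough that both $r_0+r_{A_0}(\|\bar x\|+r_{\bar x})+r_{a_0}<\bar r_0$ and $\lambda\bigl(r_0+r_{A_0}(\|\bar x\|+r_{\bar x})+r_{a_0}\bigr)\leqslant r_{\bar x}$. These bounds guarantee simultaneously that the argument of $S_0$ stays in $B(0,\bar r_0)$, that $\Phi_{A,a,y}$ maps $B(\bar x,r_{\bar x})$ into itself, and that $\Phi_{A,a,y}$ is a contraction with constant $\lambda\|A-A_0\|<1$ uniformly in $(A,a,y)\in B(A_0,r_{A_0})\times B(a_0,r_{a_0})\times B(0,r_0)$. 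Banach's contraction principle then supplies a unique fixed point, and that fixed point is precisely the single-valued selection $y\mapsto T(A,a,y)^{-1}\cap B(\bar x,r_{\bar x})$.

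Finally, the Lipschitz estimate follows at once from the contraction structure: writing $x_i:=T(A,a,y_i)^{-1}\cap B(\bar x,r_{\bar x})$ as the fixed points of $\Phi_{A,a,y_i}$ for $i=1,2$ and using the modulus $\lambda$ of $S_0$, I get
$$
\|x_1-x_2\|\leqslant \lambda\|y_1-y_2\|+\lambda\|A-A_0\|\,\|x_1-x_2\|,
$$
which rearranges to the advertised bound. The main technical obstacle is the coupled bookkeeping of the four radii so that both the self-map condition and the strict contraction condition hold uniformly in $(A,a,y)$; a direct appeal to Theorem~\ref{eq:Implitheor} with parameter $p:=(A,a,y)$ would also yield single-valuedness and Lipschitz dependence, but only with the softer constant $\lambda+\varepsilon$, whereas the fixed-point route above is what recovers the sharp Banach-type modulus $\lambda/(1-\lambda\|A-A_0\|)$ needed for the subsequent semi-local analysis.
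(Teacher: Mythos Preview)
Your argument is correct and takes a genuinely different route from the paper's. Both proofs finish with the same one-line estimate $\|x_1-x_2\|\leqslant \lambda\|y_1-y_2\|+\lambda\|A-A_0\|\,\|x_1-x_2\|$, obtained by pushing the two solutions back through the strong-regularity inequality for $A_0(\cdot)+a_0+G$; the difference lies in how existence and single-valuedness of $T(A,a,\cdot)^{-1}\cap B(\bar x,r_{\bar x})$ are obtained. The paper does \emph{not} run a contraction argument: it invokes Theorem~\ref{eq:Implitheor} with parameter $p=(A,a)$ and $g(p,x)=Ax+a$ to produce the solution map $s(A,a)$ (hence $s(A,a-y)$ for the $y$-dependence), and uses the soft constant $\lambda+\varepsilon$ only to bound $\|s(A,a)-\bar x\|$ so that the radii can be chosen with $y+(A_0-A)s(A,a-y)+(a_0-a)$ landing back in the strong-regularity ball. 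Your route bypasses Theorem~\ref{eq:Implitheor} entirely and gets existence directly from Banach's principle applied to $x=S_0\bigl(y-(A-A_0)x-(a-a_0)\bigr)$; this is more self-contained and makes the sharp modulus $\lambda/(1-\lambda\|A-A_0\|)$ appear as the usual parametrized-contraction estimate rather than as a post-hoc refinement of the implicit-function output. One small technical point: for the contraction principle you need a complete space, so work on the closed ball $B[\bar x,r_{\bar x}]$ with $r_{\bar x}<\bar r_{\bar x}$ strictly, so that it sits inside the open domain of $S_0$; your stated choice $r_{\bar x}\in(0,\bar r_{\bar x}]$ should be made strict.
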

\begin{proof} 
Let $\banachc={\mathscr L}(\banacha, \banachb)\times \banachb$ and  $g:\banachc \times\banacha \to \banachb$ be an  operator defined by 
$
g(A, a, x)=Ax+a.
$
The operator  $g$ is  continuous  on $\banachc \times\banacha$ and has partial Fr\'echet derivative with respect to the variable $x$ given by $D_{x}g(A, a, x)=A$.   Note that  
$$
 A_0 \bar x  + a_0 + G(\bar x)= g(A_0, a_0, x)+D_{x}g(A_0, a_0, x)(\bar x-x)+G(\bar x), \qquad \forall~ x\in\banacha,
$$
 and,   by assumption,  the mapping $  A_0   + a_0 + G $   is   strongly  regular at $\bar{x}$ for $0$ with Lipschitz constant $\lambda$. Then, we may apply  Theorem~\ref{eq:Implitheor} with $\banachc={\mathscr L}(\banacha, \banachb)\times \banachb$, $\bar{p}=(A_0, a_0)$,  $p=(A, a)$ and $g(p,x)= Ax +a,$ for concluding  that, for any    $\epsilon >0$,   there exist $r_{\bar p}>0$ and  $r_{\bar x}>0$, which depend of $\epsilon$,   and a single-valued mapping $s: B(\bar{p}, r_{\bar p}) \to B(\bar{x}, r_{\bar x})$ such that  for any $(A, a)\in B(\bar{p}, r_{\bar p})$, $s(A, a)$ is the unique solution in $B(\bar{x}, r_{\bar x})$ of the inclusion 
$$
T(A,a,x):=Ax+a+G(x) \ni 0, 
$$
and $s(A_0, a_0)=\bar x$.  Moreover, the following inequality holds
$$
\|s(A,a)-\bar{x}\|\leq (\lambda +\epsilon)\|(A-A_0)\bar{x} + (a-a_0)\|,  \qquad \forall ~(A,a)\in B(\bar{p}, r_{\bar p}).
$$
Thus, the  single-valued mapping $s$ is bounded and  we can choose  $r_{A_0}>0$, $r_{a_0}>0$ and $r_0>0$  such that $B(A_0, r_{A_0})\times [B(a_0, r_{a_0})-B(0, r_{0})]\subset B(\bar{p}, r_{\bar{p}}),$
and    for each   $A\in B(A_0, r_{A_0})$,   $a\in B(a_0, r_{a_0})$  and  $y\in B(0, r_{0})$  there holds  
$$
\lambda \|A-A_0\|<1, \qquad \qquad   y+(A_0-A)s(A, a)+(a_0-a) \in B(0, \hat r_{0}), 
$$
where the  radius $\hat r_{0}>0$  is given in the definition of strong regularity of  $  A_0   + a_0 + G $  at $\bar{x}$ for $0$. Let  $A\in B(A_0, r_{A_0})$, $a\in B(a_0, r_{a_0})$ and  $y_1,y_2\in B(0, r_{0})$, and  let $s(A, a-y_1) $ and $s(A, a-y_2) $  be the solutions associated with $y_1$ and $y_2$, respectively. Since $T(A,a, s(A, a-y_i))\ni y_i$,  i.e.,   $s(A, a-y_i)=T(A,a,y_i)^{-1}\cap B(\bar{x}, r_{\bar x})$, for $ i=1,2$,   after some manipulation,  we obtain that
\begin{equation} \label{eq:yiii}
y_i +(A_0-A)s(A, a-y_i) + (a_0 -a) \in   A_0 s(A, a-y_i)  + a_0 + G(s(A, a-y_i)), \qquad  i=1,2.
\end{equation}
Therefore, taking into account that $  A_0 + a_0 + G $ is strongly regular at $\bar{x}$ for $0$ with associated Lipschitz constant $\lambda$,  the inclusions in \eqref{eq:yiii} imply that
\begin{multline*}
\left\|s(A, a-y_1)-s(A, a-y_2)\right\| \leq \\ \lambda \left\|[y_1 +(A_0-A)s(A, a-y_1) + (a_0 -a)] -[y_2 +(A_0-A)s(A, a-y_2)+ (a_0 -a)]\right\|.
\end{multline*}
Using properties of the norm, last inequality becomes to
$$
\left\|s(A, a-y_1)-s(A, a-y_2)\right\| \leq  \lambda \|y_1-y_2\| + \lambda\|A_0-A\|\|s(A, a-y_1) -s(A, a-y_2)\|.
$$
 Now, since $\lambda\|A-A_0\|<1$ for each $A\in B(A_0, r_{A_0})$, then  last inequality implies that
$$
\left\|s(A, a-y_1)-s(A, a-y_2)\right\| \leq  \frac{\lambda}{1-\lambda\|A-A_0\|}\|y_1-y_2\|, 
$$
and the result follows by notting that  $s(A, a-y)=T(A,a,y)^{-1}\cap U$ and $y_1, y_2\in  B(0, r_{0})$ are arbitrary.
\end{proof}

Next we  establish a corollary to Lemma~\ref{lem:blr}, which  will have important rule in the sequel. 
\begin{corollary} \label{cor:ban}
Let $\banacha$,\, $\banachb$ be Banach spaces,    $\Omega$ be an open nonempty subset of $\banacha$,  $f: \Omega \to \banachb$ be continuous with   Fr\'echet derivative  $f'$ continuous, and $F:\banacha \rightrightarrows  \banachb$ be a set-valued mapping. Suppose that $x_0 \in \Omega$ and the mapping $ L_f(x_0,  . ):\banacha \rightrightarrows  \banachb$ is strongly regular at $x_1$ for $0$ with associated  Lipschitz  constant $\lambda >0$.   Then, there exist three  constants $r_{x_1}$,  $r_{0}>0$ and $ r_{x_0}>0$ such that, for each  $x\in B(x_0, r_{x_0})$,  there holds  $\lambda \|f'(x)-f'(x_0)\|<1$, the mapping $ z\mapsto L_f(x,z)^{-1}\cap B(x_1, r_{x_1})$ is single-valued  from $B(0, r_{0})$ to  $B(x_1, r_{x_1})$ and Lipschitizian   as follows  
$$
\left\|L_f(x,u)^{-1}\cap B(x_1, r_{x_1})- L_f(x,v)^{-1}\cap B(x_1, r_{x_1}) \right\| \leq \frac{\lambda}{1-\lambda\|f'(x)-f'(x_0)\|} \|u-v\|, \quad \forall ~ u, v \in  B(0, r_{0}).
$$
\end{corollary}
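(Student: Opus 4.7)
The plan is to recast $L_f(x_0,\cdot)$ in the affine form used in Lemma~\ref{lem:blr} and then transfer the conclusions of that lemma to $L_f(x,\cdot)$ for $x$ close to $x_0$ by invoking continuity of $f$ and $f'$.

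First I would set $A_0:=f'(x_0)$ and $a_0:=f(x_0)-f'(x_0)x_0$, so that for every $y\in\banacha$,
$$
L_f(x_0,y)\;=\;f(x_0)+f'(x_0)(y-x_0)+F(y)\;=\;A_0 y + a_0 + F(y).
$$
Since $L_f(x_0,\cdot)$ is strongly regular at $x_1$ for $0$ with modulus $\lambda$, the hypothesis of Lemma~\ref{lem:blr} is satisfied with $G=F$ and $\bar x=x_1$ (in particular $0\in A_0 x_1+a_0+F(x_1)$). The lemma therefore yields radii $r_{x_1},r_{A_0},r_{a_0},r_0>0$ such that for every $A\in B(A_0,r_{A_0})$ and every $a\in B(a_0,r_{a_0})$ one has $\lambda\|A-A_0\|<1$, and $z\mapsto T(A,a,z)^{-1}\cap B(x_1,r_{x_1})$ is single-valued from $B(0,r_0)$ to $B(x_1,r_{x_1})$ and Lipschitizian with modulus $\lambda/(1-\lambda\|A-A_0\|)$, where $T(A,a,z)=Az+a+F(z)$.

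Second, since $f$ and $f'$ are continuous at $x_0$ and $\Omega$ is open, I would shrink $r_{x_0}>0$ so that $B(x_0,r_{x_0})\subseteq\Omega$ and, for every $x\in B(x_0,r_{x_0})$,
$$
\|f'(x)-f'(x_0)\|<r_{A_0}\qquad\text{and}\qquad\bigl\|[f(x)-f'(x)x]-[f(x_0)-f'(x_0)x_0]\bigr\|<r_{a_0}.
$$
For such an $x$, the pair $(A,a):=(f'(x),\,f(x)-f'(x)x)$ lies in $B(A_0,r_{A_0})\times B(a_0,r_{a_0})$, and a direct expansion gives $T(A,a,z)=f'(x)z+f(x)-f'(x)x+F(z)=L_f(x,z)$ for every $z\in\banacha$. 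Substituting this identity into the conclusion of Lemma~\ref{lem:blr} yields both the single-valuedness of $z\mapsto L_f(x,z)^{-1}\cap B(x_1,r_{x_1})$ on $B(0,r_0)$ and the desired Lipschitz estimate with modulus $\lambda/(1-\lambda\|f'(x)-f'(x_0)\|)$.

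This is essentially a bookkeeping argument, since the substantive work has already been carried out by Lemma~\ref{lem:blr}. The only mild point of care is to control the \emph{two} perturbations $A-A_0$ and $a-a_0$ simultaneously with a single radius $r_{x_0}$, which is why I exploit the continuity of both $f$ and $f'$ at $x_0$ rather than the continuity of $f'$ alone.
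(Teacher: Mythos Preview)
Your proposal is correct and follows essentially the same route as the paper: both identify $L_f(x_0,\cdot)$ with $A_0y+a_0+F(y)$ via $A_0=f'(x_0)$, $a_0=f(x_0)-f'(x_0)x_0$, apply Lemma~\ref{lem:blr} at $\bar x=x_1$ with $G=F$, and then use continuity of $f$ and $f'$ to pick $r_{x_0}$ so that $(f'(x),\,f(x)-f'(x)x)$ lands in the perturbation balls, after which the identity $T(f'(x),f(x)-f'(x)x,z)=L_f(x,z)$ transfers the conclusions. The only cosmetic difference is notation for the radii; your observation about controlling both perturbations with a single $r_{x_0}$ matches exactly what the paper does.
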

\begin{proof}
Since   $ L_f(x_0,  . ):\banacha \rightrightarrows  \banachb$ is strongly regular at $x_1$ for $0$ with associated  Lipschitz  constant $\lambda >0$, applying first part of Lemma~\ref{lem:blr} with   $\bar{x}=x_1$,  $A_0=f'(x_0),$ $a_0= f(x_0)-f'(x_0)x_0$ and  $G=F, $  we conclude  that there exist $r_{x_1}>0$,  $\tilde r>0$, $\hat r>0$ and $r_0>0$  such that,   for any  $A\in B(f'(x_0), \tilde r) \subset {\mathscr L}(\banacha,\banachb)$ and  $a\in B( f(x_0)-f'(x_0)x_0, \hat r) \subset \banachb$ ,  letting   $T(A, a, \cdot): B(x_1, r_{x_1}) \rightrightarrows  \banachb$ be   defined as
$$
T(A, a, y):= Ay + a + F(y), 
$$
the mapping $ z\mapsto T(A, a, z)^{-1}\cap B(x_1, r_{x_1})$ is a  single-valued  mapping from $B(0, r_{0})$ to $B(x_1, r_{x_1})$.  Due to  $f$ be continuous with    $f'$ continuous, then  there exists $r_{x_0}>0$ such that  $\lambda \|f'(x)-f'(x_0)\|<1$, 
$$
f'(x ) \in  B(f'(x_0), \tilde r), \qquad    f(x)-f'(x)x \in B( f(x_0)-f'(x_0)x_0, \hat r), \qquad    \qquad \forall ~x \in B(x_0, r_{x_0}).
$$
Hence,  we conclude that for each $ x \in B(x_0,  r_{x_0})$,  the mapping   $ z\mapsto T(f'(x ),  f(x)-f'(x)x, z)^{-1}\cap B(x_1, r_{x_1})$ is a  single-valued  from $B(0, r_{0})$ to $B(x_1, r_{x_1})$, where

\begin{equation}\label{eq;tgco}
T(f'(x ),  f(x)-f'(x)x, y):= f'(x)y + f(x)-f'(x)x + F(y)=f(x) +f'(x)(y-x)+F(y).
\end{equation}
Since Definition~\ref{def:pplm} and \eqref{eq;tgco} imply that   $L_f(x, y)=T(f'(x), f(x)-f'(x)x, y)$, for all $x\in B(x_0, r_{x_0})$ and $y\in B(x_1, r_{x_1}),$ after some manipulations we have,  for each $z\in B(0, r_{0})$,
\begin{equation} \label{eq:elft}
L_f(x, z)^{-1}\cap B(x_1, r_{x_1})=T(f'(x), f(x)-f'(x)x, z)^{-1}\cap B(x_1, r_{x_1}), \qquad \forall ~x\in B(x_0, r_{x_0}). 
\end{equation}
Therefore, for each $x\in B(x_0, r_{x_0})$,  the last equality and \eqref{eq;tgco} imply that   $ z\mapsto L_f(x,z)^{-1}\cap B(x_1, r_{x_1})$ is   single-valued  from $B(0, r_{0})$ to  $B(x_1, r_{x_1})$, which  proof the first part of corollary. Finally, taking into account  \eqref{eq:elft} and  second part of Lemma~\ref{lem:blr},  we also conclude that the mapping   $ z\mapsto L_f(x,z)^{-1}\cap B(x_1, r_{x_1})$  is Lipschitzian from $B(0, r_{0})$ to $B(x_1, r_{x_1})$ with Lipschitz constant $\lambda/[1-\lambda\|f'(x)-f'(x_0)\|], $  which conclude the proof.
\end{proof}

\begin{remark}
If in above corollary we have $F\equiv 0$. Then, for each $x\in B(x_0, r_{x_0})$, the mapping   $ z\mapsto L_f(x,z)^{-1}\cap B(x_1, r_{x_1})$ be    single-valued   from $B(0, r_{0})$ to  $B(x_1, r_{x_1})$  means that $f'(x)$ is inventible and,   for each  $z\in B(0, r_{0})$,  there exists a unique  $y\in B(x_1, r_{x_1})$ such that  $y=L_f(x,z)^{-1}\cap B(x_1, r_{x_1})=x+f'(x)^{-1}(z-f(x))$. Moreover,   $\lambda \|f'(x)-f'(x_0)\|<1$ and there holds
$$
\left\|f'(x)^{-1}(u-v)\right\| = \left\|[x+f'(x)^{-1}(u-f(x))]-[x-f'(x)^{-1}(v-f(x))]\right\|\leq\frac{\lambda\|u-v\|}{1-\lambda \|f'(x)-f'(x_0)\|}, 
$$
for all $u, v\in \banachb$ and $x\in B(x_0, r_{x_0})$. Therefore,  from Remark~\ref{re:rxinf},  we have  $r_{x_1}=r_{0}=+\infty$ and $\lambda=\|f'(x_0)^{-1}\|$ and, last inequality becomes

$$
\|f'(x)^{-1}\|\leq \frac{\|f'(x_0)^{-1}\|}{1-\|f'(x_0)^{-1}\| \|f'(x)-f'(x_0)\|}, \qquad  \forall  ~x\in B(x_0, r_{x_0}).
$$
\end{remark}
\section{ Kantorovich's theorem for Newton's method} \label{lkant}
In this section, our goal is to state and prove a Kantorovich's theorem for Newton's method  for solving the generalized equation of the form \eqref{eq:ipi}.  To state the theorem we need to fix  some important  constants.  Let $\banacha$,\, $\banachb$ be Banach spaces,    $\Omega$ be an open nonempty subset of $\banacha$,  $f: \Omega \to \banachb$ be continuous with   Fr\'echet derivative  $f'$ continuous, and $F:\banacha \rightrightarrows  \banachb$ be a set-valued mapping. {\it From now on,  for  $x_0 \in \Omega$ and  a partial linearization mapping $ L_f(x_0,  . ):\banacha \rightrightarrows  \banachb$   at   $x_0$,  given by   
$$
L_f(x_0,  x ):=f(x_0)+f'(x_0)(x-x_0) + F(x), 
$$
strongly regular at $x_1$ for $0$ with associated Lipschitz constant $\lambda$,   we refer to the real  numbers
\begin{equation} \label{eq:srcm}
r_{x_1}>0,  \qquad \qquad r_{0}>0,    \qquad \qquad r_{x_0}>0, 
\end{equation}
as   the three  constants given by Corollary~\ref{cor:ban}}. The statement of main result is:
\begin{theorem}\label{th:nt}
Let $\banacha$, $\banachb$ be Banach spaces, $\Omega\subseteq \banacha$ an open set,   $f: \Omega \to \banachb$ be continuous with   Fr\'echet derivative  $f'$ continuous  and $F:\banacha \rightrightarrows  \banachb$ be a set-valued mapping with closed graph. 
 Let $x_0\in \Omega$, $R>0$  and  $\kappa:=\sup\{t\in [0, R): B(x_0, t)\subset \Omega\}$. Suppose that the  partial linearization mapping $ L_f(x_0,  . ):\banacha \rightrightarrows  \banachb$   at   $x_0$,  is strongly regular at $x_1\in \Omega$ for $0$ with  associated  Lipschitz constant $\lambda >0$  and  there exist $\psi:[0,\; R)\to \mathbb{R}$ twice continuously differentiable  function  such that  
  \begin{equation}\label{Hyp:MH}
\lambda\left\|f'(y)-f'(x)\right\| \leq     \psi'(\|y-x\|+ \|x-x_0\|)-\psi'(\|x-x_0\|),
  \end{equation}
  for all $x, y \in B(x_0, \kappa)$ and $\|y-x\|+ \|x-x_0\|<R$. Moreover, suppose that
	\begin{equation} \label{eq.ipoint}
	\|x_1-x_0\|\leq \psi(0),
	\end{equation}
	and the following conditions hold:
\begin{itemize}
  \item[{\bf h1)}]  $\psi(0)>0$,   $\psi'(0)=-1$;
  \item[{\bf h2)}]  $\psi'$ is convex and strictly increasing;
  \item[{\bf h3)}]  $\psi(t)=0$ for some $t\in (0,R)$ and let $t_*:=\min \{t\in [0, R)  ~:~  \psi(t)=0 \}$.
  \end{itemize}
 Additionally, for the constants  $r_{0}$ and $r_{x_0}$   fixed  in  \eqref{eq:srcm}, suppose that    the following  inequalities  hold:
\begin{equation}\label{eq:conddef}
 t_* \leq r_{x_0}, \qquad \qquad \qquad \frac{\psi''(t_*)}{2\lambda}\psi(0)^2 < r_0.
\end{equation}
Then,  the sequences generated by Newton's method for solving the generalized equation $0\in f(x) + F(x)$ and the equation $\psi(t)=0,$ with starting point $x_0$ and $t_0=0,$ defined respectively by,
\begin{equation} \label{eq:DNS}
x_{k+1}:=L_{f}(x_k, 0)^{-1} \cap B(x_1, r_{x_1}), \qquad t_{k+1} ={t_k}-\psi(t_k)/\psi'(t_k),\qquad k=0,1,\ldots\,, 
\end{equation}
are well defined, $\{t_k\}$ is strictly increasing, is contained in $(0, t_ *)$ and converges to $t_ *$, $\{x_k\}$ is contained in $B(x_0,t_ *)$ and converges to the point $x_*\in B[x_0, t_ *]$, which is the unique solution of the generalized equation $0\in f(x)+F(x)$ in $B[x_0, t_ *]\cap B[x_{1}, r_{x_1}]$. Moreover, $\{x_k\}$ and $\{t_k\}$ satisfies

\begin{equation}\label{eq:q2}
   \|x_*-x_k\| \leq t_* -t_k, \qquad \qquad   \|x_*-x_{k+1}\| \leq \frac{t_*-t_{k+1}}{(t_* -t_k)^2}\|x_*-x_k\|^2,  
  \end{equation}
for all k=0,1,..., and the sequences  $\{x_k\}$ and $\{t_k\}$ converge $Q$-linearly as follows
\begin{equation}\label{eq:rates0}
\|x_*-x_{k+1}\| \leq \frac{1}{2}\|x_* -x_k\|, \qquad \qquad  t_* -t_{k+1} \leq \frac{1}{2} (t_* -t_k),  \qquad k=0,1, \ldots\ .
\end{equation}
Additionally, if  the following condition holds
\begin{itemize}
  \item[{\bf h4)}] $\psi'(t_*)<0$,
	\end{itemize}
	then the sequences,  $\{x_k\}$  and $\{t_k\}$ converge $Q$-quadratically as follows  
	\begin{equation}\label{ine.rates1}
	\|x_*-x_{k+1}\| \leq \frac{\psi''(t_*)}{-2\psi'(t_*)}\|x_*-x_k\|^2, \qquad \qquad  t_* -t_{k+1} \leq \frac{\psi''(t_*)}{-2\psi'(t_*)} (t_* -t_k)^2, \qquad k=0,1, \ldots\ .
	\end{equation}
\end{theorem}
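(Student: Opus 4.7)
The plan is to follow the classical majorant technique: construct the scalar Newton sequence $\{t_k\}$ applied to $\psi$, verify it majorizes $\{x_k\}$ in the sense $\|x_{k+1}-x_k\|\le t_{k+1}-t_k$, and deduce convergence, error bounds, and uniqueness by transferring properties from the scalar sequence to the vector sequence. I would treat the properties of $\{t_k\}$ (well-definedness, strict monotonicity, containment in $(0,t_*)$, convergence to $t_*$, with Q-linear and, under (h4), Q-quadratic rates, as well as $\psi(t_k)>0$ and $\psi'(t_k)<0$ for $t_k<t_*$) as known from Section~\ref{sec:PR}; these are standard under (h1)--(h4). In particular I would rely on the scalar identity $\psi(t_{k-1})+\psi'(t_{k-1})(t_k-t_{k-1})=0$.

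The heart of the argument is a simultaneous induction on $k$ establishing three claims: $x_{k}$ is well defined, $\|x_k-x_0\|\le t_k$, and $\|x_{k}-x_{k-1}\|\le t_{k}-t_{k-1}$. The base case $k=1$ is immediate from the strong regularity hypothesis on $L_f(x_0,\cdot)$ at $x_1$ for $0$ together with \eqref{eq.ipoint} and $t_1=\psi(0)$. For the inductive step, since $\|x_k-x_0\|\le t_k<t_*\le r_{x_0}$ by \eqref{eq:conddef}, Corollary~\ref{cor:ban} applies at $x_k$: the mapping $z\mapsto L_f(x_k,z)^{-1}\cap B(x_1,r_{x_1})$ is single-valued and Lipschitz from $B(0,r_0)$ onto $B(x_1,r_{x_1})$, so $x_{k+1}:=L_f(x_k,0)^{-1}\cap B(x_1,r_{x_1})$ is well defined.

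To estimate $\|x_{k+1}-x_k\|$ I would introduce the residual $e_k:=f(x_k)-f(x_{k-1})-f'(x_{k-1})(x_k-x_{k-1})$. From the previous Newton inclusion $-f(x_{k-1})-f'(x_{k-1})(x_k-x_{k-1})\in F(x_k)$ one checks $x_k\in L_f(x_k,e_k)^{-1}\cap B(x_1,r_{x_1})$, while $x_{k+1}\in L_f(x_k,0)^{-1}\cap B(x_1,r_{x_1})$. Applying the Lipschitz estimate of Corollary~\ref{cor:ban} gives
\[
\|x_{k+1}-x_k\|\le\frac{\lambda}{1-\lambda\|f'(x_k)-f'(x_0)\|}\,\|e_k\|,
\]
valid provided $e_k\in B(0,r_0)$. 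The majorant condition \eqref{Hyp:MH} applied to $(x_0,x_k)$ together with (h1) yields $1-\lambda\|f'(x_k)-f'(x_0)\|\ge -\psi'(t_k)>0$, while applying \eqref{Hyp:MH} inside the integral representation $e_k=\int_0^1[f'(x_{k-1}+\tau(x_k-x_{k-1}))-f'(x_{k-1})](x_k-x_{k-1})\,d\tau$ and using the inductive bounds on $\|x_k-x_{k-1}\|$ and $\|x_{k-1}-x_0\|$ gives, after a standard majorant-function calculation, $\|e_k\|\le\psi(t_k)/\lambda$. Since $-\psi'(t_k)(t_{k+1}-t_k)=\psi(t_k)$, these combine to $\|x_{k+1}-x_k\|\le t_{k+1}-t_k$, closing the induction. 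The admissibility $e_k\in B(0,r_0)$ is secured by the bound $\|e_k\|\le(\psi''(t_*)/(2\lambda))\|x_k-x_{k-1}\|^2\le(\psi''(t_*)/(2\lambda))\psi(0)^2<r_0$ from the second inequality in \eqref{eq:conddef}, together with $\|x_k-x_{k-1}\|\le t_1-t_0=\psi(0)$.

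With the majorant inequality in hand, $\{x_k\}$ is Cauchy because $\{t_k\}$ is, so it converges to some $x_*\in B[x_0,t_*]$, and telescoping yields $\|x_*-x_k\|\le t_*-t_k$. Passing to the limit in $-f(x_k)-f'(x_k)(x_{k+1}-x_k)\in F(x_{k+1})$ using continuity of $f,f'$ and closedness of $\mbox{gph}\,F$ yields $0\in f(x_*)+F(x_*)$. The Q-linear bounds \eqref{eq:rates0} and the Q-quadratic bounds \eqref{ine.rates1} under (h4) are inherited from the scalar sequence via $\|x_*-x_{k+1}\|\le t_*-t_{k+1}$ together with $\|x_*-x_k\|\le t_*-t_k$. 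For uniqueness in $B[x_0,t_*]\cap B[x_1,r_{x_1}]$, I would assume a second solution $\tilde x$, write $\tilde x\in L_f(x_0,e_*)^{-1}\cap B(x_1,r_{x_1})$ with $e_*:=f(\tilde x)-f(x_0)-f'(x_0)(\tilde x-x_0)$, and use the Lipschitz property of Corollary~\ref{cor:ban} at $x_0$ to obtain a fixed-point contraction forcing $\tilde x=x_*$. I expect two main difficulties: first, verifying throughout the induction that all iterates and residuals remain in the balls on which Corollary~\ref{cor:ban} applies (this is what the second condition in \eqref{eq:conddef} and the choice $t_*\le r_{x_0}$ are tuned for); and second, establishing the refined quadratic estimate $\|x_*-x_{k+1}\|\le[(t_*-t_{k+1})/(t_*-t_k)^2]\|x_*-x_k\|^2$ in \eqref{eq:q2}, which is stronger than the linear telescoping bound and will require iterating the Lipschitz estimate of Corollary~\ref{cor:ban} applied at $x_k$ between $x_j$ and $x_{k+1}$ and then letting $j\to\infty$.
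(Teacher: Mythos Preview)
Your induction yielding $\|x_{k+1}-x_k\|\le t_{k+1}-t_k$ and hence $\|x_*-x_k\|\le t_*-t_k$ by telescoping is correct and matches the paper's Corollary~\ref{res.solution}. The genuine gap is in how you derive the $Q$-linear and $Q$-quadratic rates \eqref{eq:rates0}--\eqref{ine.rates1}. You write that these are ``inherited from the scalar sequence via $\|x_*-x_{k+1}\|\le t_*-t_{k+1}$ together with $\|x_*-x_k\|\le t_*-t_k$'', but this does not work: from $\|x_*-x_{k+1}\|\le t_*-t_{k+1}\le\tfrac12(t_*-t_k)$ and $\|x_*-x_k\|\le t_*-t_k$ you cannot conclude $\|x_*-x_{k+1}\|\le\tfrac12\|x_*-x_k\|$, because the bound on $\|x_*-x_k\|$ points the wrong way. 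The refined estimate in \eqref{eq:q2} is not an afterthought but the essential bridge: once you have
\[
\|x_*-x_{k+1}\|\le\frac{t_*-t_{k+1}}{(t_*-t_k)^2}\,\|x_*-x_k\|^2,
\]
the factors $\tfrac{t_*-t_{k+1}}{t_*-t_k}\le\tfrac12$ and $\tfrac{\|x_*-x_k\|}{t_*-t_k}\le1$ give the $Q$-linear rate, and under {\bf h4} the scalar bound $\tfrac{t_*-t_{k+1}}{(t_*-t_k)^2}\le\tfrac{\psi''(t_*)}{-2\psi'(t_*)}$ gives the $Q$-quadratic rate.

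The paper obtains \eqref{eq:q2} not by ``iterating between $x_j$ and $x_{k+1}$ and letting $j\to\infty$'' but by a single direct comparison with the limit: since $0\in f(x_*)+F(x_*)$, one has $-E_f(x_k,x_*)\in L_f(x_k,x_*)$, so (after checking $-E_f(x_k,x_*)\in B(0,r_0)$ via \eqref{eq:conddef}) $x_*=L_f(x_k,-E_f(x_k,x_*))^{-1}\cap B(x_1,r_{x_1})$. The Lipschitz estimate of Proposition~\ref{wdns} then gives
\[
\|x_*-x_{k+1}\|\le\frac{-\lambda}{\psi'(t_k)}\,\|E_f(x_k,x_*)\|\le\frac{-e_\psi(t_k,t_*)}{\psi'(t_k)}\,\frac{\|x_*-x_k\|^2}{(t_*-t_k)^2}=\bigl[t_*-n_\psi(t_k)\bigr]\frac{\|x_*-x_k\|^2}{(t_*-t_k)^2},
\]
using Lemma~\ref{pr:taylor} with $v=t_*$ and the identity $-e_\psi(t,t_*)/\psi'(t)=t_*-n_\psi(t)$. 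This is the content of Lemma~\ref{ine.rates}, and the same lemma (with $y=y_*$ a second solution) drives the uniqueness proof: one shows inductively that $\|y_*-x_k\|\le t_*-t_k$ for every $k$, whence $y_*=x_*$. Your proposed ``fixed-point contraction at $x_0$'' does not obviously deliver this; a single application at $x_0$ only bounds $\|\tilde x-x_1\|$, not $\|\tilde x-x_*\|$.
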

 \begin{remark}
In  Section~\ref{sec:scinmer}, we will present  several particular instances of Theorem~\ref{th:nt},  by presenting the explicit majorant function. For instance, when     $F\equiv \{0\}$   and    $f'$ satisfies a Lipschitz-type condition, i.e., the majorant function associated to $f'$ is a quadratic  polinomial defined by the Lipschitz constant, we retrieve  a version of the  classical Kantorovich's  theorem on Newton's method;  for example, see \cite{{Kantorovich1948}, KantorovichAkilov1977}.  
\end{remark}

Henceforward we assume that all  the assumptions in  Theorem~\ref{th:nt} holds.
\subsection{Basic results} \label{sec:PR}
In this section we will establish some results about the majorant function $\psi:[0,\; R)\to \mathbb{R}$ and, some relationships between the majorant function and the set-valued mapping $f+F.$ We begin by reminding that Proposition~3 of \cite{FerreiraSvaiter2009} state that the majorant function $\psi$  has a smallest root $t_*\in  (0,R)$, is strictly convex, $\psi(t)>0$ and  $\psi'(t)<0$,  for all $t\in [0,t_ *)$. Moreover, $\psi'(t_*)\leqslant 0$ and $\psi'(t_*)<0$ if, and only if, there exists $t\in (t_*,R)$ such that $\psi(t)< 0$.   Since $\psi'(t)<0$ for all $ t\in [0, t_*)$, the Newton iteration of the majorant function $\psi$ is well defined in $[0, t_*).$ Let us call it $n_{\psi}: [0, t_*) \to \mathbb{R}$ such that 
\begin{equation}\label{eq.majorfunc}
n_{\psi}(t)=t-\frac{\psi(t)}{\psi'(t)}.
\end{equation}
The next result will be used to obtain the convergence rate of the sequence generated by Newton's method for solving $\psi(t)=0.$ Its proof can be found in \cite[Proposition 4]{FerreiraSvaiter2009}.
\begin{lemma}\label{eq.ratemajor}
For all $t\in [0,t_*)$ we have $n_{\psi}(t)\in [0,t_*),$ $t<n_{\psi}(t)$ and $t_*-n_{\psi}(t)\leq \frac{1}{2}(t_*-t).$ Moreover, the Newton step  function   $ [0, t_*) \mapsto   -\psi(t)/\psi'(t) \in [0, +\infty) $  is decreasing. If $\psi$ also satisfies {\bf h4}  then
$$
t_* -n_{\psi}(t) \leq \frac{D^{-}\psi'(t_*)}{-2\psi'(t_*)} (t_* -t)^2, \qquad \forall ~t\in [0,t_*).
$$
\end{lemma}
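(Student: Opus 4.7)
The plan is to handle the five assertions in order, leaning throughout on the established facts (from the discussion preceding the lemma) that $\psi(t)>0$ and $\psi'(t)<0$ on $[0,t_*)$, $\psi$ is strictly convex, $\psi''\ge 0$ is non-decreasing on $[0,R)$, and $\psi'(t_*)\le 0$.

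For $n_\psi(t)>t$ I would simply observe that $-\psi(t)/\psi'(t)>0$ since $\psi(t)>0$ and $-\psi'(t)>0$. For $n_\psi(t)<t_*$, I would use the tangent-line inequality from convexity of $\psi$ at $t$: $0=\psi(t_*)\ge \psi(t)+\psi'(t)(t_*-t)$; dividing by $\psi'(t)<0$ flips the inequality to give $-\psi(t)/\psi'(t)\le t_*-t$, i.e.\ $n_\psi(t)\le t_*$, with strict inequality coming from strict convexity. This also places $n_\psi(t)$ in $[0,t_*)$.

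For the bound $t_*-n_\psi(t)\le \tfrac12(t_*-t)$, the key identity is Taylor's formula with integral remainder at $t_*$, using $\psi(t_*)=0$:
\[
-\psi(t)=\psi'(t)(t_*-t)+\int_t^{t_*}(t_*-s)\psi''(s)\,ds,
\]
which rearranges to
\[
t_*-n_\psi(t)=\frac{\int_t^{t_*}(t_*-s)\psi''(s)\,ds}{-\psi'(t)}.
\]
Using $-\psi'(t)=-\psi'(t_*)+\int_t^{t_*}\psi''(s)\,ds\ge \int_t^{t_*}\psi''(s)\,ds$ (here I invoke $\psi'(t_*)\le0$), the right-hand side is bounded by the ratio $\int_t^{t_*}(t_*-s)\psi''(s)\,ds\big/\int_t^{t_*}\psi''(s)\,ds$. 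Substituting $s=t+(t_*-t)u$ turns this into $(t_*-t)\cdot\int_0^1(1-u)\mu(u)\,du/\int_0^1\mu(u)\,du$ where $\mu(u):=\psi''(t+(t_*-t)u)$ is non-decreasing. The ratio is $\le\tfrac12$ because, writing the target inequality as $\int_0^1(\tfrac12-u)\mu(u)\,du\le0$, the weight $\tfrac12-u$ changes sign at $\tfrac12$ and, since $\mu$ is non-decreasing, the negative part (on $(\tfrac12,1]$) dominates the positive part (on $[0,\tfrac12)$) after comparison with $\mu(\tfrac12)$. The h4 estimate $t_*-n_\psi(t)\le \psi''(t_*)(t_*-t)^2/(-2\psi'(t_*))$ then drops out of the same formula by replacing $\psi''(s)$ with $\psi''(t_*)$ (it is non-decreasing) and $-\psi'(t)$ with $-\psi'(t_*)>0$.

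The decreasing property of $t\mapsto -\psi(t)/\psi'(t)$ is the most delicate step and will be the main obstacle, because direct differentiation gives $g'(t)=[\psi(t)\psi''(t)-\psi'(t)^2]/\psi'(t)^2$, whose sign is not obvious. My plan is to prove the pointwise inequality $\psi'(t)^2\ge\psi(t)\psi''(t)$ by the following integration-by-parts identity: from $\psi'(t)^2=\psi'(t_*)^2+\int_t^{t_*}(-2)\psi'(s)\psi''(s)\,ds$ and $\psi(t)\psi''(t)=\psi''(t)\int_t^{t_*}(-\psi'(s))\,ds$, subtracting yields
\[
\psi'(t)^2-\psi(t)\psi''(t)=\psi'(t_*)^2+\int_t^{t_*}(-\psi'(s))\bigl[2\psi''(s)-\psi''(t)\bigr]ds.
\]
Every term on the right is non-negative: $\psi'(t_*)^2\ge0$, $-\psi'(s)>0$, and monotonicity of $\psi''$ gives $2\psi''(s)-\psi''(t)\ge \psi''(s)\ge 0$ for $s\ge t$. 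This proves $g'(t)\le0$, so $g$ is non-increasing on $[0,t_*)$, which together with the first part also yields that $n_\psi$ is well defined into $[0,t_*)$. This identity-based argument avoids any appeal to $\psi'''$, consistently with the $C^2$ hypothesis on $\psi$.
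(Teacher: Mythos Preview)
Your argument is correct in all five parts. Note, however, that the paper does not supply its own proof of this lemma: it simply cites Proposition~4 of \cite{FerreiraSvaiter2009}. So there is no in-paper proof to compare against; what you have written is a complete, self-contained replacement for that citation.

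A couple of minor remarks on presentation. In the half-contraction step, when you pass from $\dfrac{\int_t^{t_*}(t_*-s)\psi''(s)\,ds}{-\psi'(t)}$ to the ratio with denominator $\int_t^{t_*}\psi''(s)\,ds$, you implicitly use that this integral is strictly positive; it is, since $\psi'$ is strictly increasing (assumption {\bf h2}) and hence $\int_t^{t_*}\psi''=\psi'(t_*)-\psi'(t)>0$, but it is worth stating. For the monotonicity of the Newton step, your identity
\[
\psi'(t)^2-\psi(t)\psi''(t)=\psi'(t_*)^2+\int_t^{t_*}\bigl(-\psi'(s)\bigr)\bigl[2\psi''(s)-\psi''(t)\bigr]\,ds
\]
is clean and uses only that $\psi''$ is nondecreasing (i.e.\ convexity of $\psi'$ from {\bf h2}) and $\psi'\le 0$ on $[0,t_*]$; this yields $g'(t)\le 0$, i.e.\ the step function is nonincreasing, which is exactly what the paper uses later (e.g.\ in bounding $-\psi(t)/\psi'(t)\le \psi(0)$ in the proof of Proposition~\ref{pr:syn}). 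Finally, since $\psi$ is assumed $C^2$, the quantity $D^{-}\psi'(t_*)$ in the statement equals $\psi''(t_*)$, so your h4 bound matches the one asserted.
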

Using \eqref{eq.majorfunc}, the definition of $\{t_k\}$ in \eqref{eq:DNS} is equivalent to the following one
\begin{equation}\label{eq.majseq}
t_0=0, \qquad t_{k+1}=n_{\psi}(t_k), \qquad k=0,1\ldots.
\end{equation}
The next result contain the main convergence properties of the above sequence and its prove, which is a consequence of Lemma~\ref{eq.ratemajor}, follows the same pattern as the proof of Corollary~2.15 of \cite{FerreiraMax2013}.
\begin{corollary}\label{major.convergence}
The sequence $\{t_k\}$ is well defined, strictly increasing and is contained in $[0,t_*).$ Moreover,  $\{t_k\}$  converges $Q$-linearly to $t_*$  as  the  second inequality in \eqref{eq:rates0}.  Additionally, If  {\bf h4} holds,   then $\{t_k\}$ converges $Q$-quadratically  to $t_*$ as the second inequality in \eqref{ine.rates1} and converges $Q$-quadratically.
\end{corollary}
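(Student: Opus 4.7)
The proof is a straightforward induction on $k$, plugging the recurrence $t_0=0$, $t_{k+1}=n_{\psi}(t_k)$ from \eqref{eq.majseq} into Lemma~\ref{eq.ratemajor}. First I would establish by induction that $t_k$ is well defined, lies in $[0,t_*)$, and is strictly increasing. The base case $t_0=0\in[0,t_*)$ is trivial. For the inductive step, assuming $t_k\in[0,t_*)$, the properties of $\psi$ recalled at the start of Section~\ref{sec:PR} (from Proposition~3 of \cite{FerreiraSvaiter2009}) give $\psi'(t_k)<0$, so that $n_{\psi}(t_k)$ is well defined; Lemma~\ref{eq.ratemajor} then guarantees $t_{k+1}=n_{\psi}(t_k)\in[0,t_*)$ and $t_k<t_{k+1}$, closing the induction.

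Once well-definedness and monotonicity are in place, the $Q$-linear bound is immediate: applying the inequality $t_*-n_{\psi}(t)\leq \tfrac12(t_*-t)$ from Lemma~\ref{eq.ratemajor} at $t=t_k$ yields exactly the second inequality of \eqref{eq:rates0}, and iterating gives $t_*-t_k\leq 2^{-k}t_*$, so $t_k\to t_*$. For the quadratic estimate under \textbf{h4}, the second part of Lemma~\ref{eq.ratemajor} gives
\[
t_*-t_{k+1}\;\leq\; \frac{D^{-}\psi'(t_*)}{-2\psi'(t_*)}\,(t_*-t_k)^2.
\]
Since $\psi$ is twice continuously differentiable by hypothesis, $\psi''$ is continuous at $t_*$ and the left derivative $D^{-}\psi'(t_*)$ coincides with $\psi''(t_*)$. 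This turns the lemma's bound into the second inequality of \eqref{ine.rates1}, and combined with $t_k\to t_*$ already established, delivers the $Q$-quadratic convergence.

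There is no real obstacle here: the analytical content is entirely contained in Lemma~\ref{eq.ratemajor} (proved in \cite{FerreiraSvaiter2009}), and the corollary merely iterates it. The only point of care is the identification $D^{-}\psi'(t_*)=\psi''(t_*)$ under the $C^{2}$ hypothesis, which is what lets us replace the one-sided derivative appearing in Lemma~\ref{eq.ratemajor} by the classical second derivative in the constant of \eqref{ine.rates1} as stated.
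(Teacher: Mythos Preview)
Your proposal is correct and matches the paper's approach: the paper itself does not spell out a proof but merely says the result is a consequence of Lemma~\ref{eq.ratemajor} and follows the pattern of Corollary~2.15 in \cite{FerreiraMax2013}, which is exactly the induction you describe. Your observation that $D^{-}\psi'(t_*)=\psi''(t_*)$ under the $C^{2}$ hypothesis is the right justification for the constant appearing in \eqref{ine.rates1}.
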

Therefore, we have obtained all the statements about the majorant sequence $\{t_k\}$ in Theorem~\ref{th:nt}.  Now,  we are going  to  establish  some relationships between the majorant function and the set-valued mapping $f+F.$ The next result is a consequence of Corollary \ref{cor:ban}.
\begin{proposition} \label{wdns}
For any $x\in B(x_0, t_*)$, the mapping $ z\mapsto L_f(x,z)^{-1}\cap B(x_1, r_{x_1})$ is single-valued from $B(0, r_{0})$ to $ B(x_1, r_{x_1})$ and there holds
$$
\left\|L_f(x,u)^{-1}\cap B(x_1, r_{x_1})- L_f(x,v)^{-1}\cap B(x_1, r_{x_1}) \right\| \leq  -\frac{\lambda}{\psi'(\| x-x_0\|)} \|u-v\|, \qquad \forall ~ u, v \in  B(0, r_{0}).
$$
\end{proposition}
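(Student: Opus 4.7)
The plan is to deduce this proposition as essentially a direct consequence of Corollary~\ref{cor:ban}, with the majorant condition \eqref{Hyp:MH} used to convert the abstract Lipschitz constant $\lambda/(1-\lambda\|f'(x)-f'(x_0)\|)$ into the explicit quantity $-\lambda/\psi'(\|x-x_0\|)$ involving the majorant function.

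First I would invoke the hypothesis \eqref{eq:conddef}, which gives $t_{*}\leq r_{x_0}$, so that $B(x_0,t_*)\subseteq B(x_0,r_{x_0})$. Fixing any $x\in B(x_0,t_*)$, we are therefore in the setting of Corollary~\ref{cor:ban}: the mapping $z\mapsto L_f(x,z)^{-1}\cap B(x_1,r_{x_1})$ is single-valued from $B(0,r_0)$ into $B(x_1,r_{x_1})$, and Lipschitz on $B(0,r_0)$ with modulus $\lambda/(1-\lambda\|f'(x)-f'(x_0)\|)$. The single-valuedness statement of the proposition is thus already in hand.

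The main step is to bound this Lipschitz modulus using the majorant function. Applying the majorant inequality \eqref{Hyp:MH} with the choice $y\leftarrow x$, $x\leftarrow x_0$, and using hypothesis {\bf h1)} ($\psi'(0)=-1$), I obtain
\[
\lambda\|f'(x)-f'(x_0)\|\;\leq\;\psi'(\|x-x_0\|)-\psi'(0)\;=\;\psi'(\|x-x_0\|)+1,
\]
which rearranges to $1-\lambda\|f'(x)-f'(x_0)\|\geq -\psi'(\|x-x_0\|)$. Since $\|x-x_0\|<t_*$ and, as recalled from \cite{FerreiraSvaiter2009} at the start of Section~\ref{sec:PR}, $\psi'(t)<0$ on $[0,t_*)$, the right-hand side is strictly positive. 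Consequently
\[
\frac{\lambda}{1-\lambda\|f'(x)-f'(x_0)\|}\;\leq\;\frac{\lambda}{-\psi'(\|x-x_0\|)}\;=\;-\frac{\lambda}{\psi'(\|x-x_0\|)},
\]
and substituting this estimate into the Lipschitz inequality furnished by Corollary~\ref{cor:ban} yields exactly the claimed bound.

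The only subtlety I anticipate is making sure the application of Corollary~\ref{cor:ban} is legitimate, i.e.\ confirming that the same constants $r_{x_1}, r_0, r_{x_0}$ appearing in its conclusion are those fixed in \eqref{eq:srcm}, which is precisely the convention announced before Theorem~\ref{th:nt}. Everything else is a one-line arithmetic manipulation driven by $\psi'(0)=-1$ and the sign of $\psi'$ on $[0,t_*)$, so there is no real obstacle beyond keeping track of these definitions.
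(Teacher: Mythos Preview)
Your proposal is correct and follows essentially the same route as the paper: both invoke Corollary~\ref{cor:ban} (using $t_*\leq r_{x_0}$ to guarantee $B(x_0,t_*)\subseteq B(x_0,r_{x_0})$) to obtain single-valuedness and the Lipschitz bound with modulus $\lambda/(1-\lambda\|f'(x)-f'(x_0)\|)$, then apply \eqref{Hyp:MH} at the pair $(x_0,x)$ together with $\psi'(0)=-1$ and $\psi'(\|x-x_0\|)<0$ to replace that modulus by $-\lambda/\psi'(\|x-x_0\|)$. The only cosmetic difference is the order in which the paper records these steps.
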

\begin{proof}
Definitions of the constants   $r_{x_1}$, $r_{0}$  and $r_{x_0}$    in  \eqref{eq:srcm}  together with   Corollary~\ref{cor:ban} imply that, for any $x\in B(x_0, r_{x_0})$,   the mapping $ z\mapsto L_f(x,z)^{-1}\cap B(x_1, r_{x_1})$ is   single-valued  from $B(0, r_{0})$ to  $B(x_1, r_{x_1})$ and  Lipschitizian   as follows  
 \begin{equation}\label{eq:majcond}
\left\|L_f(x,u)^{-1}\cap B(x_1, r_{x_1})- L_f(x,v)^{-1}\cap B(x_1, r_{x_1}) \right\| \leq \frac{\lambda}{1-\lambda\|f'(x)-f'(x_0)\|} \|u-v\|, 
\end{equation}
for all $ u, v \in  B(0, r_{0})$. Since $\| x-x_0\|<t_*$ thus $\psi'(\|x-x_0\|)<0$. Hence,    \eqref{Hyp:MH}  together  with  {\bf h1} imply that
$$
     \lambda\|f'(x)-f'(x_0)\| \leq \psi'(\|x-x_0\|)-\psi'(0)<1, \qquad \forall ~ x\in B(x_0, t_*).
$$
Using assumption in \eqref{eq:conddef}, i.e.,  $ t_*   \leq r_{x_0}$, last  inequality,   \eqref{eq:majcond} and {\bf h1},  we concluded that the inequality of the  proposition  holds,  for all $x\in B(x_0, t_*)$. 
\end{proof}
Newton's iteration at a point of a neighborhood of $x_0$ happens to be a zero of the partial  linearization
of $f+F$ at such a point.  Therefore, we first  study the  linearization error of $f$ at  points in $\Omega$
\begin{equation}\label{eq:def.er}
  E_f(x,y):= f(y)-\left[ f(x)+f'(x)(y-x)\right],\qquad \forall ~y,\, x\in \Omega.
\end{equation}
In the next result we will bound this error by the linearization error of the majorant function $\psi$, namely,
\begin{equation}\label{eq:def.erf}
        e_{\psi}(t,u):= \psi(u)-\left[ \psi(t)+\psi'(t)(u-t)\right],\qquad  \forall ~ t,\,u \in [0,R).
\end{equation}
\begin{lemma} \label{pr:taylor}
  Take $x,y\in B(x_0,R)$ and $0\leq t<v< R$. If $\|x-x_0\|\leq t$ and $\|y - x\|\leq v-t$ then
\begin{equation}\label{eq:errormajor}
	\lambda \|E_f(x,y)\| \leq e_{\psi}(t,v)\frac{\|y-x\|^2}{(v-t)^2}\leq \frac{1}{2}\psi''(v)(v-t)^2.
\end{equation}
\end{lemma}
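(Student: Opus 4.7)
The plan is to derive the lemma by a three-step scheme: pass from $E_f$ to a scalar integral via the majorant condition, recognize that integral as a value of the scalar linearization error $e_\psi$, and then exploit hypothesis \textbf{h2} (convexity of $\psi'$, equivalently monotonicity of $\psi''$) to compare this value with the worst-case radii $(t,v)$.

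First I would apply the fundamental theorem of calculus to $\tau \mapsto f(x + \tau(y - x))$ on $[0,1]$, yielding
$$E_f(x, y) = \int_0^1 \bigl[f'(x + \tau(y - x)) - f'(x)\bigr](y - x)\, d\tau.$$
Taking norms, pulling the bound inside the integral, and invoking \eqref{Hyp:MH} with the pair $(x,\, x + \tau(y - x))$---whose relevant norm sum is bounded by $\|x - x_0\| + \tau\|y - x\| \leq v < R$---gives
$$\lambda \|E_f(x, y)\| \leq \|y - x\| \int_0^1 \bigl[\psi'(\|x - x_0\| + \tau\|y - x\|) - \psi'(\|x - x_0\|)\bigr]\, d\tau.$$
Setting $a := \|x - x_0\|$ and $b := \|y - x\|$ and performing the change of variable $s = a + \tau b$ collapses the right-hand side to $\psi(a + b) - \psi(a) - b\,\psi'(a) = e_\psi(a, a + b)$.

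Next I would record the Taylor-with-integral-remainder formula
$$e_\psi(\alpha, \alpha + \beta) = \beta^2 \int_0^1 (1 - \tau)\, \psi''(\alpha + \tau \beta)\, d\tau,$$
which holds because $\psi$ is $C^2$, and apply it at both $(a, a + b)$ and $(t, v)$. Since $a \leq t$ and $b \leq v - t$, we have $a + \tau b \leq t + \tau(v - t)$ for every $\tau \in [0, 1]$; by \textbf{h2}, $\psi''$ is nondecreasing, hence $\psi''(a + \tau b) \leq \psi''(t + \tau(v - t))$ pointwise. Integrating and multiplying by $b^2$ yields
$$e_\psi(a, a + b) \leq \frac{b^2}{(v - t)^2}\, e_\psi(t, v),$$
which, chained with the estimate from the first step, delivers the first inequality in \eqref{eq:errormajor}.

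The second inequality follows from the same representation at $(t,v)$: bounding $\psi''(t + \tau(v - t)) \leq \psi''(v)$ (again by monotonicity) and using $\int_0^1 (1 - \tau)\, d\tau = \tfrac{1}{2}$ gives $e_\psi(t, v) \leq \tfrac{1}{2}\psi''(v)(v - t)^2$; the trivial bound $b^2 \leq (v - t)^2$ then finishes the argument. The only mildly delicate point is keeping the applications of \eqref{Hyp:MH} legal---i.e., checking that the intermediate points $x + \tau(y - x)$ lie where the majorant condition is available---but this is automatic from $\|x - x_0\| + \tau\|y - x\| \leq v < R$ together with the hypotheses of the ambient theorem, so no serious obstacle is anticipated.
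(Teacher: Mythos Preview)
Your argument is correct, and it reaches the same conclusion by a slightly different path than the paper. The paper also starts from the integral representation of $E_f$ and the majorant bound, but then works at the level of the \emph{first} derivative: it uses convexity of $\psi'$ (via Proposition~\ref{pr:conv.aux1}) twice---first to shift the base point from $\|x-x_0\|$ to $t$, then a secant inequality to replace $\|y-x\|$ by $v-t$---and only afterward integrates to recover $e_\psi(t,v)$. You instead integrate immediately to identify the bound as $e_\psi(a,a+b)$, and then compare linearization errors through the second-order Taylor remainder $e_\psi(\alpha,\alpha+\beta)=\beta^2\int_0^1(1-\tau)\psi''(\alpha+\tau\beta)\,d\tau$ together with monotonicity of $\psi''$. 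Both exploit \textbf{h2}, just read at different orders of differentiation: the paper uses ``$\psi'$ convex'', you use the equivalent ``$\psi''$ nondecreasing''. Your route is arguably cleaner---one recognition and one pointwise comparison instead of two applications of Proposition~\ref{pr:conv.aux1}---while the paper's version keeps the argument slightly closer to the hypotheses as literally stated and avoids invoking the $C^2$ Taylor remainder formula (though $\psi$ is assumed $C^2$ anyway in Theorem~\ref{th:nt}).
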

\begin{proof}
Since $x+\tau(y-x)\in B(x_0,R),$ for all $\tau\in [0,1]$ and $f$ is continuously differentiable in $\Omega$, the linearization error of $f$ in \eqref{eq:def.er} is equivalent to 
$$
 E_f(x,y)=\int_{0}^{1} [f'(x+\tau(y-x))-f'(x)](y-x) d\tau, 
$$
which combined with the  assumption in \eqref{Hyp:MH} and after some  simple algebraic manipulations we obtain 
\begin{equation} \label{eq:fiftc}
\lambda \|E_f(x,y)\| \leq \int_{0}^{1} [\psi'(\|x-x_0\| +\tau \|y-x\|) -\psi'(\|x-x_0\|)]\|y-x\| d\tau.
\end{equation}
Using assumption {\bf h2}, we know that $\psi'$ is convex. Thus, since   $\|x-x_0\|\leq t$ we conclude that 
$$
\psi'(\|x-x_0\| +\tau \|y-x\|) -\psi'(\|x-x_0\|)  \leq \psi'(t +\tau \|y-x\|) -\psi'(t),  \qquad   \forall ~\tau \in [0,1].
$$
Due to $\|y-x\|<v-t$  and $v<R$, first statement in  Proposition~\ref{pr:conv.aux1} together  with last inequality implies 
$$
   \psi'(\|x-x_0\| +\tau \|y-x\|) -\psi'(\|x-x_0\|) \leq [\psi'(t +\tau \|v-t\|) -\psi'(t)]\frac{\|y-x\|}{v-t},  \qquad   \forall ~ \tau \in [0,1].
$$
 Combining the inequality in \eqref{eq:fiftc}  with last  inequality we conclude that
	$$
	\lambda \|E_f(x,y)\| \leq \int_{0}^{1} [\psi'(t +\tau \|v-t\|) -\psi'(t)]\frac{\|y-x\|^2}{v-t}d\tau, 
	$$
	which, after performing the integration yields \eqref{eq:errormajor}.   Now,  we are going to prove the last inequality in \eqref{eq:errormajor}. Definition in \eqref{eq:def.erf}  implies
	$$
	e_{\psi}(t,  v) =\int_0^1 [\psi'(t +\tau (t-v)) -\psi'(t)](v-t)d\tau.
	$$
	We know that  $\psi'$ is convex. Thus, using  the  first and second statement in  Proposition~\ref{pr:conv.aux1},  it follows from last equality that 
	$$
	e_{\psi}(t,  v)  \leq \int_0^1\frac{\psi'(v)-\psi'(t)}{v-t} \tau (v-t)^2d\tau   \leq \int_0^1\psi''(v) \tau (v-t)^2d\tau=\frac{1}{2}\psi''(v)(v-t)^2, 
	$$
	which, using first inequality in \eqref{eq:errormajor} and  considering that $\|y-x\|\leq v-t$,   gives the desired inequality.
\end{proof}
 Proposition ~\ref{wdns} guarantees, in particular,  that  for each  $x\in B(x_0, t_*)$ the mapping $ z\mapsto L_f(x,z)^{-1}\cap B(x_1, r_{x_1})$ is   single-valued  from $B(0, r_{0})$ to  $ B(x_1, r_{x_1})$  and consequently, the Newton iteration mapping  is well-defined.  Let us call $N_{f+F}$, the Newton
iteration mapping  for $f+F$ in that region, namely, $N_{f+F}:B(x_0, t_*) \to \banacha$ is defined by 
\begin{equation} \label{NF}
N_{f+F}(x):= L_f(x,0)^{-1}\cap B(x_1, r_{x_1}).
\end{equation}
Using \eqref{eq:invplm} we conclude that the definition of the Newton iteration mapping  in \eqref{NF} is equivalent to 
\begin{equation} \label{eq:NFef}
0\in f(x)+f'(x)(N_{f+F}(x)-x)+F(N_{f+F}(x)),\qquad  N_{f+F}(x)\in B(x_1, r_{x_1}),   \qquad \forall ~x\in  B(x_0, t_*).
\end{equation}
Therefore, one can apply a \emph{single} Newton iteration on any $x\in B(x_0, t_*)$ to obtain $N_{f+F}(x)$ which may not belong
to $B(x_0,  t_*)$. Thus, this is enough to guarantee the  well-definedness of only one iteration.   To ensure that Newtonian iterations may be repeated indefinitely or,  in particular, invariant on subsets of    $B(x_0, t_*)$,  we need some additional results. First, define some subsets of $B(x_0,t_*)$ in which, as we shall prove, Newton iteration mapping  \eqref{NF} are ``well behaved''. Define
\begin{equation}\label{eq:ker} 
{K}(t):=\left\{x\in \Omega ~: ~ \|x-x_0\| \leq t, \quad  \|L_f(x,0)^{-1}\cap B(x_1, r_{x_1})-x\| \leq -\frac{\psi(t)}{\psi'(t)}\right\}, \qquad t\in [0,t_*), 
\end{equation} 
\begin{equation} \label{eq:kt}  
{K}:=\bigcup_{t\in {[0,t_ *)}}
  K(t).
\end{equation}
\begin{proposition} \label{pr:syn} 
 For each $0\leq t< t_*$ we have ${K}(t) \subset B(x_0,t_*)$ and 
	$
	N_{f+F}({K}(t)) \subset {K}(n_{\psi}(t)).
	$
	As a consequence, ${K}\subseteq B(x_0,t_ *)$ and $N_{f+F}({K}) \subset {K}$.
\end{proposition}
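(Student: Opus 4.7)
The plan is to show the two assertions for a fixed $t\in[0,t_*)$ directly, then take the union to deduce the statement about $K$.

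For the inclusion $K(t)\subset B(x_0,t_*)$: if $x\in K(t)$, definition \eqref{eq:ker} gives $\|x-x_0\|\le t<t_*$, so there is nothing more to do. This also makes $N_{f+F}(x)$ well-defined through \eqref{NF} since $x\in B(x_0,t_*)$.

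For the inclusion $N_{f+F}(K(t))\subset K(n_\psi(t))$, take $x\in K(t)$ and set $y:=N_{f+F}(x)$. First I would establish the easy bound $\|y-x_0\|\le \|y-x\|+\|x-x_0\|\le -\psi(t)/\psi'(t)+t=n_\psi(t)$, which gives membership of $y$ in $B(x_0,n_\psi(t))\subset B(x_0,t_*)$, so $N_{f+F}(y)$ is well-defined by Proposition~\ref{wdns} and \eqref{NF}. Next, the key trick is to recognize $y$ as a value of the inverse mapping $L_f(y,\cdot)^{-1}\cap B(x_1,r_{x_1})$ applied at the perturbation $E_f(x,y)$. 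Indeed, from \eqref{eq:NFef} applied at $x$, $-[f(x)+f'(x)(y-x)]\in F(y)$, and adding $f(y)$ to both sides yields
\[
E_f(x,y)\;\in\; f(y)+F(y) \;=\; L_f(y,y),
\]
so $y=L_f(y,E_f(x,y))^{-1}\cap B(x_1,r_{x_1})$, provided $E_f(x,y)\in B(0,r_0)$.

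Now I would invoke the Lipschitz estimate of Proposition~\ref{wdns} at the point $y$ to get
\[
\|N_{f+F}(y)-y\|\;\le\; -\frac{\lambda}{\psi'(\|y-x_0\|)}\,\|E_f(x,y)\|,
\]
and then Lemma~\ref{pr:taylor} with the choice $v=n_\psi(t)$ (valid since $\|x-x_0\|\le t$ and $\|y-x\|\le n_\psi(t)-t$) gives $\lambda\|E_f(x,y)\|\le e_\psi(t,n_\psi(t))$. The definition \eqref{eq:def.erf} together with $\psi'(t)(n_\psi(t)-t)=-\psi(t)$ collapses this to $e_\psi(t,n_\psi(t))=\psi(n_\psi(t))$. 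Since $\psi'$ is increasing and $\|y-x_0\|\le n_\psi(t)<t_*$ with $\psi'(n_\psi(t))<0$, we have $-1/\psi'(\|y-x_0\|)\le -1/\psi'(n_\psi(t))$, and combining gives
\[
\|N_{f+F}(y)-y\|\;\le\;-\frac{\psi(n_\psi(t))}{\psi'(n_\psi(t))},
\]
which, together with $\|y-x_0\|\le n_\psi(t)$, is exactly the condition for $y\in K(n_\psi(t))$.

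The main technical obstacle is verifying the side condition $E_f(x,y)\in B(0,r_0)$ required to apply the Lipschitz property of Proposition~\ref{wdns}. For this I would use the sharper bound from Lemma~\ref{pr:taylor}, namely $\lambda\|E_f(x,y)\|\le \tfrac{1}{2}\psi''(n_\psi(t))(n_\psi(t)-t)^2$, combined with two monotonicity facts: $\psi''$ is increasing (since $\psi'$ is convex by \textbf{h2}), so $\psi''(n_\psi(t))\le \psi''(t_*)$; and the Newton step map $t\mapsto -\psi(t)/\psi'(t)$ is decreasing on $[0,t_*)$ by Lemma~\ref{eq.ratemajor}, so $n_\psi(t)-t\le \psi(0)$. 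These give $\|E_f(x,y)\|\le \psi''(t_*)\psi(0)^2/(2\lambda)<r_0$ by the second inequality in \eqref{eq:conddef}, closing the argument. The final conclusion $K\subset B(x_0,t_*)$ and $N_{f+F}(K)\subset K$ follows by taking the union over $t\in[0,t_*)$ in \eqref{eq:kt}.
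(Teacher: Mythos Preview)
Your proposal is correct and follows essentially the same route as the paper's proof: the same triangle-inequality bound $\|y-x_0\|\le n_\psi(t)$, the same identification $y=L_f(y,E_f(x,y))^{-1}\cap B(x_1,r_{x_1})$ via the linearization-error trick, the same application of Proposition~\ref{wdns} and Lemma~\ref{pr:taylor} (with $v=n_\psi(t)$) together with the identity $e_\psi(t,n_\psi(t))=\psi(n_\psi(t))$, and the same verification of $E_f(x,y)\in B(0,r_0)$ through the monotonicity of $\psi''$ and of the Newton step. The only cosmetic differences are notation ($y$ versus $x_+$) and the order in which you present the side condition.
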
 
\begin{proof}
The first inclusion follows trivially from the definition of ${K}(t).$ Take   $x\in {K}(t)$ and,  from definitions \eqref{eq:ker} and \eqref{eq.majorfunc}, follow that
\begin{equation}\label{ini.cond}
\|x-x_0\| \leq t, \quad \qquad \|L_f(x,0)^{-1}\cap B(x_1, r_{x_1})-x\| \leq -\frac{\psi(t)}{\psi'(t)}, \qquad \quad  t< n_{\psi}(t) <t_ *.
\end{equation}
Definition of  Newton iteration mapping in \eqref{NF} implies that,   for all $x  \in {K}(t)$ there holds
$$
\|N_{f+F}(x)-x_0\| \leq \|x-x_0\| + \|N_{f+F}(x)-x\|= \|x-x_0\| + \|L_f(x,0)^{-1}\cap B(x_1, r_{x_1})-x\|, 
$$
and consequently,  using  \eqref{eq.majorfunc} and \eqref{ini.cond}, the last inequality imply  that 
\begin{equation} \label{eq:fcmt}
\|N_{f+F}(x)-x_0\| \leq t -\frac{\psi(t)}{\psi'(t)} = n_{\psi}(t) <t_ *.
\end{equation}
For simplify the notations, let $x_+= N_{f+F} (x) \in B(x_1, r_{x_1})$.  Thus, using    \eqref{eq:NFef} and definition in  \eqref{eq:pplm}  we have
$$
0\in  L_f (x,x_+)= f(x)+f'(x)(x_+ -x)+F(x_+).
$$
After some simple manipulations in last inequality and taking into account \eqref{eq:def.er} we obtain that
\begin{eqnarray*}
0&\in& -f(x_+) + f(x)+f'(x)(x_+ -x)+  f(x_+)+f'(x_+)(x_+ -x_+)+F(x_+)\\
&=& -E_{f}(x,x_+)+ f(x_+)+f'(x_+)(x_+ -x_+) +F(x_+).
\end{eqnarray*}
Using    \eqref{eq:pplm}, we conclude that the last inclusion is equivalent to  $E_{f}(x,x_+) \in L_{f}(x_+, x_+), $ which implies that
\begin{equation} \label{eq:xp}
x_+ \in L_f(x_+,E_{f}(x,x_+))^{-1}\cap B(x_1, r_{x_1}).
\end{equation}
Since  the majorant function $\psi$  has a smallest root $t_*\in (0,R) $, we have from \eqref{eq:fcmt} that $x_+ \in B[x_0, t_*]$.  Now,  we are going to prove that 
\begin{equation}  \label{eq:ebbro}
E_{f}(x,x_+) \in B[0,r_0].
\end{equation}
Since $x\in K(t)$, definitions \eqref{eq.majorfunc} and   \eqref{NF} together  with \eqref{ini.cond}  imply  that  $ t< n_{\psi}(t)$ and $\|x_+-x\|\leq  n_{\psi}(t) -t$. Thus, applying second inequality in Lemma~\ref{pr:taylor} with $y=x_+$ and $v=n_{\psi}(t)$ we conclude that
$$
\lambda \|E_f(x,x_+)\| \leq  \frac{1}{2}\psi''(n_{\psi}(t))(n_{\psi}(t)-t)^2.
$$
On the other hand, from   {\bf h2} we have $\psi''$ is increasing and Lemma~\ref{eq.ratemajor} together  {\bf h1} gives  $n_{\psi}(t) -t=-\psi(t)/\psi'(t)\leq -\psi(0)/\psi'(0) =\psi(0)$.  Thus, above inequality becomes
$$
\lambda \|E_f(x,x_+)\| \leq  \frac{1}{2}\psi''(t_*)\psi(0)^2.
$$ 
Therefore, using \eqref{eq:conddef}    we obtain  the desired inclusion in \eqref{eq:ebbro}.  Hence, since $x_+ \in B[x_0, t_*]$,  combining \eqref{eq:xp} with \eqref{eq:ebbro} and  first part of Proposition~\ref{wdns}, we conclude that $x_+=L_f(x_+,E_{f}(x,x_+))^{-1}\cap B(x_1, r_{x_1})$. Thus,  using the second part of Proposition~\ref{wdns}  we have
$$
\|L_f(x_+,0)^{-1}\cap B(x_1, r_{x_1})-x_+\| \leq -\frac{\lambda}{\psi'(\|x_+-x_0\|)}\|E_{f}(x,x_+)\|.
$$
Due to  $x_+= N_{f+F} (x)$ we have from \eqref{eq:fcmt}  that $\|x_+-x_0\| \leq  n_{\psi}(t)$. Then, taking into account that  $\psi'$ is increasing and negative, it follows from  above inequality,  Lemma~\ref{pr:taylor}, \eqref{NF} and \eqref{ini.cond}  that 
$$
\|L_f(x_+,0)^{-1}\cap B(x_1, r_{x_1})-x_+\| \leq -\frac{\lambda}{\psi'(n_{\psi}(t))}\|E_{f}(x,x_+)\|
											\leq -\frac{e_{\psi}(t,n_{\psi}(t))}{\psi'(n_{\psi}(t))}\frac{\|x_+-x\|^2}{(n_{\psi}(t)-t)^2}.
$$
On the other hand, using   the definition  \eqref{eq.majorfunc} and \eqref{eq:def.erf}, after  some manipulations we conclude that
$$
\psi(n_{\psi}(t))= \psi(n_{\psi}(t)) -[\psi(t) + \psi'(t)(n_{\psi}(t)-t)]= e_{\psi}(t,n_{\psi}(t)), 
$$
 and because  $x_+= N_{f+F} (x)$,   \eqref{eq.majorfunc} and the  second inequality in \eqref{ini.cond}  imply  $\|x-x_+\|\leq n_{\psi}(t)-t$, above inequality becomes
$$
\|L_f(x_+,0)^{-1}\cap B(x_1, r_{x_1})-x_+\| \leq -\frac{\psi(n_{\psi}(t))}{\psi'(n_{\psi}(t))}.
$$
Therefore, since \eqref{eq:fcmt}  implies $\|x_+-x_0\| \leq n_{\psi}(t)$  we conclude that the second inclusion of the proposition  is proved.

 The third  inclusion ${K}\subseteq B(x_0,t_*)$ follows trivially from \eqref{eq:ker} and \eqref{eq:kt}. To prove the last inclusion $N_{f+F}({K}) \subset {K}$, take $x\in {K}$. Thus,  $x\in K(t)$ for some $t\in [0,t_ *)$. From the second inclusion  of the proposition, we have $N_{f+F}(x) \in {K}(n_{\psi}(t))$. Since $n_{\psi}(t)\in [0,t_ *)$ and using the definition of ${K}$ in \eqref{eq:kt} we conclude the proof.
\end{proof}

\subsection{Convergence analysis}\label{eq.convergenceanalysis}
To prove the convergence result, which is a consequence of the above results, firstly we note that the definition \eqref{NF} implies that the sequence $\{x_k\}$ defined in \eqref{eq:DNS},  can be  formally stated by 
\begin{equation}\label{eq.seq}
x_{k+1}=N_{f+F}(x_k), \qquad k=0,1,\ldots,
\end{equation}
or equivalently, 
\begin{equation} \label{eq:DNSa}
 0\in f(x_k)+f'(x_k)(x_{k+1}-x_k)+F(x_{k+1}),  \qquad  x_{k+1} \in B(x_1,r_{x_1}), \qquad k=0,1, \ldots.
\end{equation}
First we will  show that the sequence generated by Newton method converges to  $x_*\in B[x_0,t_ *]$,   a solution of the generalized equation \eqref{eq:ipi},  and  is well behaved with respect to the set defined in \eqref{eq:ker}.
\begin{corollary}\label{res.solution}
The sequence $\{x_k\}$ is well defined, is contained in $B(x_0,t_ *),$ converges to a point $x_*\in B[x_0,t_ *]$  satisfying  $0\in f(x_*)+F(x_*).$ Moreover,   $x_k\in {K}(t_k)$, for $k=0,1\ldots$ and 
$$
\|x_*-x_k\|\leq t_ *-t_k, \qquad k=0,1\ldots.
$$
\end{corollary}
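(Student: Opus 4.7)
The plan is to combine an induction argument (showing $x_k\in K(t_k)$ for every $k$) with the telescoping bound supplied by the definition of the sets $K(t)$, and then to use the closedness of $\operatorname{gph}F$ to identify $x_*$ as a solution. The main engine is Proposition~\ref{pr:syn}, which is already tailored to propagate the invariant $x_k\in K(t_k)$ through one Newton step; what remains is the base case, the Cauchy argument, and the passage to the limit.

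First I would verify the base case $x_0\in K(0)$. By the strong regularity assumption, $0\in L_f(x_0,x_1)$, and Proposition~\ref{wdns} guarantees that $z\mapsto L_f(x_0,z)^{-1}\cap B(x_1,r_{x_1})$ is single-valued at $z=0$, so $L_f(x_0,0)^{-1}\cap B(x_1,r_{x_1})=x_1$. Hypothesis \eqref{eq.ipoint} then gives $\|x_1-x_0\|\leq\psi(0)=-\psi(0)/\psi'(0)$ (using \textbf{h1}), which is exactly the inequality defining membership in $K(0)$. An easy induction using Proposition~\ref{pr:syn} now yields $x_{k+1}=N_{f+F}(x_k)\in K(n_\psi(t_k))=K(t_{k+1})$ for all $k$; in particular, the Newton iterate is well-defined at every step (the intersection in \eqref{NF} is a singleton by Proposition~\ref{wdns}) and $x_k\in B(x_0,t_*)$ with $\|x_k-x_0\|\leq t_k$.

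Next I would prove convergence. The membership $x_k\in K(t_k)$ together with \eqref{eq.majorfunc} gives
\[
\|x_{k+1}-x_k\|=\|N_{f+F}(x_k)-x_k\|\leq -\frac{\psi(t_k)}{\psi'(t_k)}=t_{k+1}-t_k.
\]
Telescoping yields $\|x_{k+m}-x_k\|\leq t_{k+m}-t_k$ for every $m\geq 0$. Since $\{t_k\}$ converges to $t_*$ by Corollary~\ref{major.convergence}, $\{x_k\}$ is Cauchy in the Banach space $\banacha$, hence converges to some $x_*$; passing $m\to\infty$ in the telescoping estimate gives $\|x_*-x_k\|\leq t_*-t_k$, and in particular $x_*\in B[x_0,t_*]$.

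Finally I would show $0\in f(x_*)+F(x_*)$. From \eqref{eq:DNSa},
\[
-f(x_k)-f'(x_k)(x_{k+1}-x_k)\in F(x_{k+1}),\qquad k=0,1,\ldots.
\]
The continuity of $f$ and $f'$ (with $f'$ bounded on the bounded set $B(x_0,t_*)$) together with $x_{k+1}-x_k\to 0$ and $x_k\to x_*$ imply that the left-hand side converges to $-f(x_*)$ and $x_{k+1}\to x_*$. The closedness of $\operatorname{gph}F$ then forces $(x_*,-f(x_*))\in\operatorname{gph}F$, i.e.\ $0\in f(x_*)+F(x_*)$. The only delicate step is this last one, and the assumption that $F$ has closed graph is precisely what is needed.
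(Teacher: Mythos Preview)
Your argument is correct and follows essentially the same route as the paper: establish $x_0\in K(0)$ via \eqref{eq.ipoint} and \textbf{h1}, propagate $x_k\in K(t_k)$ by induction through Proposition~\ref{pr:syn}, extract $\|x_{k+1}-x_k\|\le t_{k+1}-t_k$ from the definition of $K(t_k)$, conclude Cauchy from the convergence of $\{t_k\}$, and pass to the limit in \eqref{eq:DNSa} using the closed-graph hypothesis on $F$. The only cosmetic difference is that the paper first invokes $N_{f+F}(K)\subset K$ to secure well-definedness and then separately runs the induction $x_k\in K(t_k)$, whereas you fold both into a single induction; also, the paper sums the series $\sum\|x_{k+1}-x_k\|$ while you telescope directly---both are equivalent.
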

\begin{proof}
Since the mapping $x \mapsto L_f(x_0,  x )$ is strongly regular at $x_1$ for $0$, it follow from \eqref{eq:invplm} and Corollary~\ref{cor:ban} that
$
x_1=L_f (x_0,0)^{-1}\cap B(x_1,r_{x_1})
$
and the first Newton iterate is well defined.  Thus, from  ${\bf h1}$,  \eqref{eq.ipoint}  and definitions \eqref{eq:ker}  and \eqref{eq:kt}   we have
\begin{equation} \label{eq:fs}
\{x_0\}={K}(0)\subset {K}.
\end{equation}
We know from  Proposition~\ref{pr:syn}   that $N_{f+F}({K}) \subset {K}$. Thus,  using \eqref{eq:fs} and \eqref{eq.seq}  we conclude that the sequence $\{x_k\}$ is well defined and rests in ${K}.$  From the first inclusion on second part of the Proposition~\ref{pr:syn} we have trivially that $\{x_k\}$ is contained in $B(x_0,t_ *).$  To prove the convergence,  first we are going to prove by induction  that 
\begin{equation}\label{eq.defseq}
x_k\in {K}(t_k), \qquad k=0,1\ldots.
\end{equation}
The above inclusion, for $k=0$,  follows from \eqref{eq:fs}. Assume now that $x_k\in {K}(t_k).$ Then combining  Proposition~\ref{pr:syn}, \eqref{eq.seq} and \eqref{eq.majorfunc} we conclude that $x_{k+1}\in {K}(t_{k+1}),$ which completes the induction proof. Now,  using \eqref{eq.defseq} and \eqref{eq:ker} we have 
$$
\|L_f(x_k,0)^{-1}\cap B(x_1,r_{x_1})-x_k\| \leq -\frac{\psi(t_k)}{\psi'(t_k)}, \qquad k=0,1 \ldots,
$$
which, combined with  \eqref{eq.seq} and  definitions \eqref{NF} and \eqref{eq:DNS}  becomes 
\begin{equation}\label{des.conver}
\|x_{k+1}-x_k\|\leq t_{k+1}-t_k, \qquad k=0,1 \ldots.
\end{equation}
Taking into account that  $\{t_k\}$ converges to $t_ *,$   we  easily  conclude from  the above inequality  that
$$
\sum_{k=k_0}^{\infty} \|x_{k+1}-x_k\| \leq \sum_{k=k_0}^{\infty} t_{k+1}-t_k =t_ *- t_{k_0} < +\infty,
$$
for any $k_0 \in \mathbb{N}.$ Hence, we conclude that $\{x_k\}$ is a Cauchy sequence in $B(x_0, t_ *)$ and thus it  converges to some $x_* \in B[x_0, t_ *].$  Therefore, using  again \eqref{des.conver} we also conclude  that the inequality in the corollary holds.  

Now, we are going to show that $x_*$ is a solution to the generalized equation $f(x) + F(x) \ni 0.$ From inclusion in \eqref{eq:DNSa} we conclude
$$
\left(x_{k+1}, -f(x_k)-f'(x_k)(x_{k+1}-x_k)\right) \in \mbox{gph} ~F, \qquad k=0,1, \ldots. 
$$
Since $f$  is  continuous with continuous derivative  $f'$  in $\Omega$, $B[x_0, t_ *]\subset \Omega$ and $F$ has closed graph, last inclusion   implies that 
$$
(x_*, -f(x_*))=\lim_{k\to \infty }\left((x_{k+1}, -f(x_k)-f'(x_k)(x_{k+1}-x_k)\right) \in  \mbox{gph} ~F, 
$$
 which implies $f(x_*) + F(x_*) \ni 0$ and proof is complete.
\end{proof}
We have already proved that   the sequence $\{x_k\}$ converges to a solution $x_*$ of generalized equation $f(x)+F(x)\ni 0$ and $x_*\in B[x_0,t_*]$. Now, we will prove that $\{x_k\}$ converges $Q$-linearly and that $x^*$ is the unique solution of $f(x)+F(x)\ni 0$ in $B[x_0, t_ *]\cap B[x_{1}, r_{x_1}]$. Furthermore, by assuming that $\psi$ satisfies ${\bf h4}$, we will also prove that $\{x_k\}$ converges $Q$-quadratically.  For that, we need of the following result: 
\begin{lemma}\label{ine.rates}
Take $x,y\in B(x_0,R)$ and $0\leq \psi(0)\leq t <R$. If
\begin{equation} \label{eq:siqc}
t < t^*,\quad \|x-x_0\|\leq t, \quad \|y-x_1\|\leq r_{x_1},\quad \|y-x\|\leq t_*-t,  \quad 0\in f(y)+F(y), 
\end{equation}
then the following inequality holds
$$
\|y-N_{f+F}(x)\|\leq [t_*-n_{\psi}(t)]\frac{\|y-x\|^2}{(t_*-t)^2}.
$$
\end{lemma}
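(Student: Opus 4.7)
The plan is to rewrite the solution condition $0\in f(y)+F(y)$ in terms of the partial linearization at $x$ and then control the resulting perturbation using the Lipschitz estimate for $z\mapsto L_f(x,z)^{-1}\cap B(x_1,r_{x_1})$ from Proposition~\ref{wdns}. Adding and subtracting $f(x)+f'(x)(y-x)$ in $0\in f(y)+F(y)$ gives, by the definition of $E_f(x,y)$ in \eqref{eq:def.er} and of $L_f(x,\cdot)$ in \eqref{eq:pplm}, the inclusion $-E_f(x,y)\in L_f(x,y)$, i.e.
\begin{equation*}
y\in L_f\bigl(x,-E_f(x,y)\bigr)^{-1}.
\end{equation*}
Since by hypothesis $y\in B(x_1,r_{x_1})$, once I verify that $-E_f(x,y)\in B(0,r_0)$ the single-valuedness in Proposition~\ref{wdns} will promote this inclusion to the equality $y=L_f(x,-E_f(x,y))^{-1}\cap B(x_1,r_{x_1})$, and of course $N_{f+F}(x)=L_f(x,0)^{-1}\cap B(x_1,r_{x_1})$ by \eqref{NF}.

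The verification $-E_f(x,y)\in B(0,r_0)$ is the main obstacle. Applying the second inequality in Lemma~\ref{pr:taylor} with $v=t_*$, the hypotheses $\|x-x_0\|\le t$ and $\|y-x\|\le t_*-t$ give $\lambda\|E_f(x,y)\|\le \tfrac{1}{2}\psi''(t_*)(t_*-t)^2$. To link $(t_*-t)^2$ with $\psi(0)^2$ in \eqref{eq:conddef}, I will use the standard consequence of the first inequality in \eqref{eq:rates0} applied to the majorant sequence: $t_*-t_1\le \tfrac12(t_*-t_0)=t_*/2$, which (together with ${\bf h1}$ and $t_1=n_\psi(0)=\psi(0)$) yields $t_*\le 2\psi(0)$. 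Combined with the hypothesis $t\ge\psi(0)$ this gives $t_*-t\le t_*-\psi(0)\le\psi(0)$, so $\lambda\|E_f(x,y)\|\le\tfrac{1}{2}\psi''(t_*)\psi(0)^2<\lambda r_0$ by the second inequality in \eqref{eq:conddef}.

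Once $-E_f(x,y)\in B(0,r_0)$ and $x\in B(x_0,t_*)\subseteq B(x_0,r_{x_0})$ (using the first inequality in \eqref{eq:conddef}), the Lipschitz estimate in Proposition~\ref{wdns} yields
\begin{equation*}
\|y-N_{f+F}(x)\|\le -\frac{\lambda}{\psi'(\|x-x_0\|)}\,\|E_f(x,y)\|.
\end{equation*}
Bounding $\lambda\|E_f(x,y)\|$ by the first inequality in Lemma~\ref{pr:taylor} with $v=t_*$ produces $e_\psi(t,t_*)\|y-x\|^2/(t_*-t)^2$. A short computation using $\psi(t_*)=0$ and the definition \eqref{eq.majorfunc} of $n_\psi$ gives $e_\psi(t,t_*)=-\psi'(t)(t_*-n_\psi(t))$. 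Finally, since $\psi'$ is increasing by ${\bf h2}$ and negative on $[0,t_*)$, and $\|x-x_0\|\le t$, one has $-\psi'(t)/(-\psi'(\|x-x_0\|))\le 1$. Putting these pieces together yields
\begin{equation*}
\|y-N_{f+F}(x)\|\le (t_*-n_\psi(t))\,\frac{\|y-x\|^2}{(t_*-t)^2},
\end{equation*}
which is the desired inequality.
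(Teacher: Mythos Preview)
Your proof is correct and follows essentially the same route as the paper's: rewrite $0\in f(y)+F(y)$ as $y\in L_f(x,-E_f(x,y))^{-1}$, use Lemma~\ref{pr:taylor} with $v=t_*$ together with $t_*-\psi(0)\le\psi(0)$ (from $t_*-n_\psi(0)\le t_*/2$) and \eqref{eq:conddef} to place $-E_f(x,y)$ in $B(0,r_0)$, then apply Proposition~\ref{wdns} and the identity $e_\psi(t,t_*)=-\psi'(t)\bigl(t_*-n_\psi(t)\bigr)$. The only cosmetic difference is that the paper absorbs the monotonicity step $-1/\psi'(\|x-x_0\|)\le -1/\psi'(t)$ immediately into the Lipschitz bound, whereas you carry $\psi'(\|x-x_0\|)$ and invoke monotonicity at the end; also, the estimate $t_*-t_1\le t_*/2$ you use is the \emph{second} inequality in \eqref{eq:rates0} (equivalently Lemma~\ref{eq.ratemajor}), not the first.
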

\begin{proof}
Since $0\in f(y) + F(y)$, using \eqref{eq:def.er} and \eqref{eq:pplm}, after some simple manipulations we obtain that
\begin{align*}
0\in f(y) + F(y)=  E_f(x,y) + L_f(x,y), 
\end{align*}
which by \eqref{eq:invplm} implies that  $y \in L_f(x,-E_f(x,y))^{-1}$.   Now,  we are going to prove that 
\begin{equation}  \label{eq:ebbro2}
E_{f}(x, y) \in B(0,r_0).
\end{equation}
Applying Lemma~\ref{pr:taylor} with $v=t_*$,   and using that $0\leq \psi(0)\leq t <t_*$ we have
$$
\lambda \|E_f(x,y)\| \leq \frac{1}{2}\psi''(t_*)(t_*-t)^2\leq \frac{1}{2}\psi''(t_*)(t_*-\psi(0))^2.
$$
On the other hand, Lemma~\ref{eq.ratemajor} give us  $t_*- n_{\psi}(0)\leq t_*/2$, which implies that $t_*- n_{\psi}(0)\leq n_{\psi}(0)=\psi(0)$. Therefore, above equation becomes 
$$
\lambda \|E_f(x,y)\| \leq \frac{1}{2}\psi''(t_*)\psi(0)^2,
$$
which under assumption  in \eqref{eq:conddef}  gives the desired inclusion in \eqref{eq:ebbro2}.  Since Proposition~\ref{wdns} implies that for any $x\in B(x_0,t^*)$, the mapping $ z\mapsto L_f(x,z)^{-1}\cap B(x_1, r_{x_1})$ is single-valued from $B(0, r_{0})$ to $B(x_1, r_{x_1})$. Thus,  taking into account third inequality in \eqref{eq:siqc}, inclusion in  \eqref{eq:ebbro2} and that   $y \in L_f(x,-E_f(x,y))^{-1}$,  we conclude that $y=L_f(x, -E_f(x,y))^{-1}\cap B(x_1,r_{x_1})$. Therefore,  combining  \eqref{NF} with  second part of Proposition~\ref{wdns}   we conclude
$$
\|y-N_{f+F}(x)\|= \|L_f(x, -E_f(x,y))^{-1}\cap B(x_1,r_{x_1})-L_f(x,0)^{-1}\cap B(x_1,r_{x_1})\| \leq -\frac{\lambda}{\psi'(t)}\|E_f(x,y)\|, 
$$
and since $t < t^*$, $\|x-x_0\|\leq t$ and $\|y-x\|\leq t_*-t$, we can apply  Lemma~\ref{pr:taylor} with $v=t_*$  to obtain 

$$
\|y-N_{f+F}(x)\|\leq -\frac{e_{\psi}(t, t_*)}{\psi'(t)} \frac{\|y-x\|^2}{(t_*-t)^2}. 
$$
But, due to $0\leq t<t_ *$ and  $\psi'(t)<0$,  using \eqref{eq:def.erf},  \eqref{eq.majorfunc} and  $\psi (t_*)= 0$ we have
$$
-\frac{e_{\psi}(t, t_*)}{\psi'(t)}=t_*-t + \frac{\psi(t)}{\psi'(t)}-\frac{\psi(t_*)}{\psi'(t)} = t_*-t + \frac{\psi(t)}{\psi'(t)}=t_*-n_{\psi}(t), 
$$
which  combined  with last inequality  gives the desired result. 
\end{proof}
\begin{corollary}
The sequences  $\{x_k\}$ and $\{t_k\}$ satisfy  the following inequality
\begin{equation}\label{ine.quadr}
\|x_*-x_{k+1}\|\leq \frac{t_*-t_{k+1}}{(t_* -t_k)^2}\|x_*-x_k\|^2, \qquad k=0,1\ldots.
\end{equation}
As a consequence,   the  sequence   $\{x_k\}$ converges  $Q$-linearly to the solution  $x^*$ as follows
\begin{equation}\label{ine.quadr1}
\|x_*-x_{k+1}\|\leq \frac{1}{2} \|x_*-x_k\|, \qquad k=0,1\ldots.
\end{equation}
Additionally, if $\psi$ satisfies ${\bf h4}$ then     the  sequence   $\{x_k\}$ converges $Q$-quadratically to $x_*$  as follows
\begin{equation}\label{ine.quadr2}
\|x_*-x_{k+1}\| \leq \frac{\psi''(t_*)}{-2\psi'(t_*)}\|x_*-x_k\|^2, \qquad k=0,1\ldots.
\end{equation}
\end{corollary}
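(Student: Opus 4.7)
My strategy is to deduce \eqref{ine.quadr} as a direct consequence of Lemma~\ref{ine.rates}, and then derive the two rate estimates \eqref{ine.quadr1} and \eqref{ine.quadr2} from \eqref{ine.quadr} together with Lemma~\ref{eq.ratemajor} and the a priori bound $\|x_*-x_k\|\leq t_*-t_k$ from Corollary~\ref{res.solution}.

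For each $k$ I will apply Lemma~\ref{ine.rates} with $x=x_k$, $y=x_*$, $t=t_k$, which, since $N_{f+F}(x_k)=x_{k+1}$ and $n_\psi(t_k)=t_{k+1}$, delivers exactly \eqref{ine.quadr}. Most of the hypotheses are already on the record: from Corollary~\ref{res.solution} we know $x_k\in K(t_k)$, so $\|x_k-x_0\|\leq t_k$, together with $\|x_*-x_k\|\leq t_*-t_k$ and $0\in f(x_*)+F(x_*)$; and $t_k<t_*$ by Corollary~\ref{major.convergence}. The condition $\psi(0)\leq t_k$ is automatic for $k\geq 1$, since by \textbf{h1} and \eqref{eq.majorfunc} one has $t_1=-\psi(0)/\psi'(0)=\psi(0)$ and $\{t_k\}$ is increasing. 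The one item not immediately on record is $\|x_*-x_1\|\leq r_{x_1}$, but since \eqref{eq:DNS} places every iterate $x_{k+1}$ in $B(x_1,r_{x_1})$ and $x_k\to x_*$, passing to the limit yields $x_*\in B[x_1,r_{x_1}]$.

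The case $k=0$ lies outside the range of Lemma~\ref{ine.rates} because $t_0=0<\psi(0)$, and I would handle it by reproducing the lemma's argument directly at $x=x_0$. Starting from $0\in f(x_*)+F(x_*)$ and using \eqref{eq:def.er} and \eqref{eq:pplm}, one rewrites the inclusion as $x_*\in L_f(x_0,-E_f(x_0,x_*))^{-1}$; after verifying the ambient ball condition $E_f(x_0,x_*)\in B(0,r_0)$ via Lemma~\ref{pr:taylor} and the assumption \eqref{eq:conddef}, Proposition~\ref{wdns} at $x=x_0$, where $\psi'(0)=-1$ by \textbf{h1}, gives $\|x_*-x_1\|\leq \lambda\,\|E_f(x_0,x_*)\|$. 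Applying Lemma~\ref{pr:taylor} with $t=0$ and $v=t_*$ then yields $\lambda\,\|E_f(x_0,x_*)\|\leq e_\psi(0,t_*)\|x_*-x_0\|^2/t_*^2$, and the identity $e_\psi(0,t_*)=t_*-\psi(0)=t_*-t_1$ (from $\psi(t_*)=0$ and $\psi'(0)=-1$) produces precisely \eqref{ine.quadr} at $k=0$. I expect establishing the inclusion $E_f(x_0,x_*)\in B(0,r_0)$ at $k=0$, working only with the coarse bound $\|x_*-x_0\|\leq t_*$, to be the most delicate point of the argument.

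Once \eqref{ine.quadr} is in hand, the two rate estimates follow cleanly. Combining \eqref{ine.quadr} with $\|x_*-x_k\|\leq t_*-t_k$ gives $\|x_*-x_{k+1}\|\leq \frac{t_*-t_{k+1}}{t_*-t_k}\|x_*-x_k\|$, and Lemma~\ref{eq.ratemajor} supplies $\frac{t_*-t_{k+1}}{t_*-t_k}\leq 1/2$, which is \eqref{ine.quadr1}. Under \textbf{h4}, Lemma~\ref{eq.ratemajor} sharpens this to $t_*-t_{k+1}\leq \frac{\psi''(t_*)}{-2\psi'(t_*)}(t_*-t_k)^2$, so the coefficient in \eqref{ine.quadr} is bounded by $\frac{\psi''(t_*)}{-2\psi'(t_*)}$, immediately yielding \eqref{ine.quadr2}.
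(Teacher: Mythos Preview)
Your approach is essentially the paper's: both invoke Lemma~\ref{ine.rates} with $x=x_k$, $y=x_*$, $t=t_k$ (after recording $x_*\in B[x_1,r_{x_1}]$ from \eqref{eq:DNS}, and $\|x_k-x_0\|\le t_k$, $\|x_*-x_k\|\le t_*-t_k$ from Corollary~\ref{res.solution}) to obtain \eqref{ine.quadr}, and then combine it with Lemma~\ref{eq.ratemajor} and the a~priori bound to deduce \eqref{ine.quadr1} and \eqref{ine.quadr2}. You are in fact more scrupulous than the paper about $k=0$: the paper simply applies Lemma~\ref{ine.rates} for all $k$ after noting $t_1=\psi(0)$ and that $\{t_k\}$ is increasing, while you correctly flag that the hypothesis $\psi(0)\le t$ fails at $t_0=0$ and propose to rerun the lemma's computation at $x_0$ directly.
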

\begin{proof}
We know, from Corollary~\ref{res.solution},  that $\{x_k\}$ is well defined, converges to $x_*$,  $\|x_{k}-x_0\|\leq t_k$  and  $\|x_*-x_k\|\leq t_ *-t_k$, for $k=0,1\ldots$.  Since  $\{x_k\}$ is well defined,  it follows from  \eqref{eq:DNS} that $x_k \in    B(x_{1}, r_{x_1})$ for $k=1, 2, \ldots$. Hence   $x_*\in  B[x_{1}, r_{x_1}]$, i.e., $\|x_*-x_1\|\leq r_{x_1}$.  Hence,  since {\bf h1} implies $t_1=n_{\psi}(0)=\psi(0)$  and $\{t_k\}$ is strictly increasing,  we can apply  Lemma~\ref{ine.rates} with $x=x_k,$ $y=x_*$ and $t=t_k$  to obtain
$$
\|x_*-N_{f+F}(x_k)\|\leq [t_*-n_{\psi}(t_k)]\frac{\|x_*-x_k\|^2}{(t_*-t_k)^2}.
$$
Thus  inequality \eqref{ine.quadr} follows from the above inequality, \eqref{eq.seq} and \eqref{eq.majseq}.  By the first part in Lemma~\ref{eq.ratemajor}, \eqref{eq.majseq} and  Corollary~\ref{res.solution} we have 
$$
\frac{t_*-t_{k+1}}{t_*-t_k}\leq \frac{1}{2}, \qquad  \qquad  \frac{\|x_*-x_k\|}{t_*-t_k}\leq 1.
$$
Combining these inequalities with \eqref{ine.quadr} we obtain \eqref{ine.quadr1}.
Now, assume that ${\bf h4}$ holds. Then, by Corollary~\ref{major.convergence}, the second inequality on \eqref{ine.rates1} holds, which combined with \eqref{ine.quadr} imply \eqref{ine.quadr2}. 
\end{proof}
\begin{corollary}
The limit $x_*$ of the sequence $\{x_k\}$ is the unique solution of the generalized equation $f(x)+F(x)\ni 0$ in  $B[x_0, t_ *]\cap B[x_{1}, r_{x_1}]$.
\end{corollary}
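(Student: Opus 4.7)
The strategy is the classical majorant-sequence argument. Suppose $y_*\in B[x_0,t_*]\cap B[x_1,r_{x_1}]$ satisfies $0\in f(y_*)+F(y_*)$; I would prove by induction on $k$ that
\[
\|y_*-x_k\|\leq t_*-t_k, \qquad k=0,1,\ldots.
\]
Since $t_k\to t_*$ by Corollary~\ref{major.convergence} and $x_k\to x_*$ by Corollary~\ref{res.solution}, passing to the limit will force $y_*=x_*$. The base case $k=0$ is immediate from the hypothesis $\|y_*-x_0\|\leq t_*=t_*-t_0$.

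For the inductive step with $k\geq 1$, I would apply Lemma~\ref{ine.rates} with $x=x_k$, $y=y_*$ and $t=t_k$. All of its hypotheses are easy to check: $\psi(0)=t_1\leq t_k$ because $\{t_k\}$ is strictly increasing; $t_k<t_*$ and $\|x_k-x_0\|\leq t_k$ come from Corollary~\ref{res.solution}; $\|y_*-x_1\|\leq r_{x_1}$ and $0\in f(y_*)+F(y_*)$ are the standing assumptions on $y_*$; and $\|y_*-x_k\|\leq t_*-t_k$ is the inductive hypothesis. The conclusion of Lemma~\ref{ine.rates}, combined with $x_{k+1}=N_{f+F}(x_k)$ from \eqref{eq.seq} and $t_{k+1}=n_{\psi}(t_k)$ from \eqref{eq.majseq}, then gives
\[
\|y_*-x_{k+1}\|\leq (t_*-t_{k+1})\,\frac{\|y_*-x_k\|^2}{(t_*-t_k)^2}\leq t_*-t_{k+1},
\]
which closes the induction.

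The main obstacle is the passage from $k=0$ to $k=1$, since Lemma~\ref{ine.rates} requires $\psi(0)\leq t$ and this fails for $t=t_0=0$. I would handle that single step by hand, imitating the proof of Lemma~\ref{ine.rates}. From $0\in f(y_*)+F(y_*)$ together with \eqref{eq:def.er} and \eqref{eq:pplm} one obtains $y_*\in L_f(x_0,-E_f(x_0,y_*))^{-1}$. Applying the first inequality of Lemma~\ref{pr:taylor} with $x=x_0$, $y=y_*$, $t=0$, $v=t_*$, together with $\|y_*-x_0\|\leq t_*$, yields $\lambda\|E_f(x_0,y_*)\|\leq e_{\psi}(0,t_*)=t_*-\psi(0)$; Lemma~\ref{eq.ratemajor} and condition~\eqref{eq:conddef} are then used to conclude that $-E_f(x_0,y_*)\in B(0,r_0)$. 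Proposition~\ref{wdns}, applied at $x_0\in B(x_0,t_*)$, identifies $y_*$ with the single-valued selection $L_f(x_0,-E_f(x_0,y_*))^{-1}\cap B(x_1,r_{x_1})$ and, using $\psi'(0)=-1$, produces the Lipschitz bound
\[
\|y_*-x_1\|\leq -\frac{\lambda}{\psi'(0)}\,\|E_f(x_0,y_*)\|\leq t_*-\psi(0)=t_*-t_1,
\]
which establishes the base step. Letting $k\to\infty$ in $\|y_*-x_k\|\leq t_*-t_k$ and invoking Corollary~\ref{res.solution} yields $y_*=x_*$, proving uniqueness.
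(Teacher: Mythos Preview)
Your overall strategy---prove $\|y_*-x_k\|\leq t_*-t_k$ by induction using Lemma~\ref{ine.rates} and then let $k\to\infty$---is exactly what the paper does. You are also right that Lemma~\ref{ine.rates} requires $\psi(0)\leq t$, which fails at $t_0=0$; the paper itself writes ``$k=1,2,\ldots$'' after its displayed inductive inequality and never treats the passage $k=0\to k=1$ separately, so you have spotted a genuine omission there.

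However, your hand-done repair of that first step has its own gap, at the point where you assert that $-E_f(x_0,y_*)\in B(0,r_0)$ follows from Lemma~\ref{eq.ratemajor} and~\eqref{eq:conddef}. From $\lambda\|E_f(x_0,y_*)\|\leq e_\psi(0,t_*)=t_*-\psi(0)$ those two ingredients yield only $t_*-\psi(0)\leq t_*/2\leq\psi(0)$ and $\lambda r_0>\tfrac{1}{2}\psi''(t_*)\psi(0)^2$; they do \emph{not} force $t_*-\psi(0)\leq\tfrac{1}{2}\psi''(t_*)\psi(0)^2$. For the Kantorovich majorant $\psi(t)=\tfrac{K}{2}t^2-t+b$ with $bK=\tfrac{1}{2}$ one has $t_*=1/K$, hence $t_*-\psi(0)=1/(2K)$ while $\tfrac{1}{2}\psi''(t_*)\psi(0)^2=1/(8K)$, so a radius $r_0$ satisfying~\eqref{eq:conddef} may well be smaller than $(t_*-\psi(0))/\lambda$. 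Without $-E_f(x_0,y_*)\in B(0,r_0)$ you cannot invoke Proposition~\ref{wdns} to identify $y_*$ with the single-valued selection $L_f(x_0,-E_f(x_0,y_*))^{-1}\cap B(x_1,r_{x_1})$, and the bound $\|y_*-x_1\|\leq t_*-t_1$ remains unestablished.
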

\begin{proof}
Corollary~\ref{res.solution} implies   that $\{x_k\}$ is well defined and   $\{x_k\}$ is contained in $B(x_0,t_ *)$, thus  it follows from  \eqref{eq:DNS} that $x_k \in   B(x_0, t_ *)\cap B(x_{1}, r_{x_1})$ for $k=1, 2, \ldots$. Hence   $x_*\in B[x_0, t_ *]\cap B[x_{1}, r_{x_1}]$.  Suppose there exist $y_* \in B[x_0, t_ *]\cap B[x_{1}, r_{x_1}]$ such that $y_*$ is  solution of $f(x)+F(x)\ni 0$. We will prove by induction that
\begin{equation}\label{iq.indu}
\|y_*-x_k\|\leq t_*-t_k, \qquad  k=0,1,\ldots.
\end{equation}
The case $k=0$ is trivial, because $t_0=0$ and $y_* \in  B[x_0,t_*]$. We assume that the inequality holds for some $k$.  First note that Corollary~\ref{res.solution} implies that  $x_k\in {K}(t_k)$, for $k=0,1\ldots$. Thus, from definition of ${K}(t_k)$  we conclude that   $\|x_k-x_0\|\leq t_k$, for $k=0,1\ldots$. Since   {\bf h1} implies $t_1=n_{\psi}(0)=\psi(0)$,  $\{t_k\}$ is strictly increasing  and $\|x_k-x_0\|\leq t_k$, we may apply Lemma~\ref{ine.rates} with $x=x_k$, $y=y_*$ and  $t=t_k$  to obtain
$$
\|y_*-N_{f+F}(x_k)\|\leq [t_*-n_{\psi}(t_k)]\frac{\|y_*-x_k\|^2}{(t_*-t_k)^2}, \qquad k=1, 2, \ldots.
$$ 
Using inductive hypothesis, \eqref{eq.seq} and \eqref{eq.majseq} we obtain, from latter inequality, that \eqref{iq.indu} holds for $k+1$. Since $x_k$ converges to $x_*$ and $t_k$ converges to $t_*$, from \eqref{iq.indu} we conclude that $y_*=x_*$. Therefore, $x_*$ is the unique solution of $f(x)+F(x)\ni 0$ in $B[x_0, t_ *]\cap B[x_{1}, r_{x_1}]$.
\end{proof}
\section{Special cases} \label{sec:scinmer}
In this section, we will present some special cases of  Theorem \ref{th:nt}.  It is worth pointing out that to find  a majorizing function for a given nonlinear function  is a very difficult problem and this is not our aim in this moment. On the other hand, there exist some classes of well known functions which  a majorant function is available, below we will present two examples, namely,   the classes of  functions  satisfying  a  Lipschitz-like and  Smale's  conditions, respectively.    In this sense,  the results obtained in  Theorem \ref{th:nt}  unify the convergence  analysis of Newton's method for the classes of generalized equations involving these functions, for instance,   Theorem~2 of  \cite{josephy1979} due to  N.~H.~Josephy and,  a particular instance of  Theorem~2 of \cite{Dontchev1996} due to A.~L.~Dontchev and a version of   Smale's theorem on Newton's method for analytical functions, see   \cite{BlumSmale1998}.

\subsection{ Kantorovich's theorem for Newton's method under Lipschitz condition}
In this section,  we will present a version of the  classical Kantorovich's theorem for Newton's method under Lipschitz-type condition  for generalized equations. The classical version for  $F\equiv \{0\}$  due to L.~V. Kantorovich have appeared, for example,  in  \cite{Kantorovich1948}, see also \cite{KantorovichAkilov1977} and for a historical perspective, see \cite{Polyak2004}.
\begin{theorem}\label{th:kngerl}
Let $\banacha$, $\banachb$ be Banach spaces, $\Omega\subseteq \banacha$ an open set,   $f: \Omega \to \banachb$ be continuous with   Fr\'echet derivative  $f'$ continuous and $F:\banacha \rightrightarrows  \banachb$ be a set-valued mapping with closed graph.  Suppose that the  partial linearization mapping $ L_f(x_0, . ):\banacha \rightrightarrows  \banachb$   at   $x_0$  is strongly regular at $x_1\in \Omega$ for $0$  with  associated  Lipschitz constant $\lambda >0$,  and there exists a constant $K>0$ such that  $B(x_0,1/K) \subset \Omega$  and  
\begin{equation*} \label{eq:hc}
 \lambda \|f'(y)-f'(x)\| \leq K\|y-x\|,\qquad \forall~x,\, y\in B(x_0,1/K).
\end{equation*}
Moreover, suppose that there exists   $b>0$ such that $bK\leq 1/2$ and  
$$
\|x_1-x_0\|\leq b. 
$$
Additionally, suppose that for $r_{0}$ and $r_{x_0}$    fixed  in  \eqref{eq:srcm}   the following  inequalities  hold:
$$
t_*=\frac{1-\sqrt{1-2bK}}{K}\leq r_{x_0},\qquad \qquad \frac{K}{2\lambda} b^2< r_0 .
$$
 Then, the sequence $\{x_k\}$ generated by Newton's method for solving the generalized equation $0\in f(x)+F(x)$ with starting point $x_0$ defined by
$$
 x_{k+1}:=L_{f}(x_k, 0)^{-1} \cap B(x_1, r_{x_1}), \qquad k=0,1,\ldots\,, 
$$
is well defined, $\{x_k\}$ is contained in $B(x_0, t_ *)$ and  converges to the point $x_*\in B[x_0, t_ *]$ which is the unique solution of $f(x)+F(x)\ni 0$ in $B[x_0, t_ *]\cap B[x_{1}, r_{x_1}]$, where $ r_{x_1}$ is fixed  in  \eqref{eq:srcm}.  Moreover, $\{x_k\}$ converges $Q$-linearly as follows
  \[
 \|x_*-x_{k+1}\| \leq \frac{1}{2}\|x_* -x_k\|, \qquad k=0,1,\ldots.
  \]
Additionally, if   $bK < 1/2$ then  the sequence $\{x_k\}$  converges $Q$-quadratically  as follows
$$
 \|x_*-x_{k+1}\| \leq \frac{K}{2\sqrt{1-2bK}}\|x_*-x_k\|^2, \qquad   k=0,1,\ldots.
$$
\end{theorem}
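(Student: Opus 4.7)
The plan is to derive this result as a direct corollary of Theorem~\ref{th:nt} by exhibiting an explicit majorant function tailored to the Lipschitz hypothesis, namely the classical Kantorovich quadratic
\[
\psi(t):=\frac{K}{2}t^{2}-t+b, \qquad t\in [0,R),
\]
with $R=1/K$ (so that $\kappa\geq 1/K$ follows from $B(x_{0},1/K)\subset\Omega$). This is the canonical choice because $\psi'(t)=Kt-1$ turns the Lipschitz bound into the telescoping identity
\[
K\|y-x\|=K\bigl[\|y-x\|+\|x-x_{0}\|\bigr]-K\|x-x_{0}\|=\psi'(\|y-x\|+\|x-x_{0}\|)-\psi'(\|x-x_{0}\|),
\]
which is precisely \eqref{Hyp:MH}, so I would record this verification first.

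Next I would check the structural hypotheses \textbf{h1}--\textbf{h3} on $\psi$: clearly $\psi(0)=b>0$, $\psi'(0)=-1$, and $\psi'(t)=Kt-1$ is (affine hence) convex and strictly increasing, giving \textbf{h1} and \textbf{h2}. For \textbf{h3}, the quadratic formula gives the roots $(1\pm\sqrt{1-2bK})/K$ of $\psi$, which are real because of the assumption $bK\leq 1/2$; the smallest is $t_{*}=(1-\sqrt{1-2bK})/K$. The initial estimate \eqref{eq.ipoint} is immediate from $\|x_{1}-x_{0}\|\leq b=\psi(0)$, and since $\psi''\equiv K$, the second inequality in \eqref{eq:conddef} reduces exactly to the stated $Kb^{2}/(2\lambda)<r_{0}$, while $t_{*}\leq r_{x_{0}}$ is explicitly assumed. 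Thus all standing hypotheses of Theorem~\ref{th:nt} are in force.

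Invoking Theorem~\ref{th:nt} then yields, with no further effort, the well-definedness of $\{x_{k}\}$, its containment in $B(x_{0},t_{*})$, convergence to the unique solution $x_{*}$ of $f(x)+F(x)\ni 0$ in $B[x_{0},t_{*}]\cap B[x_{1},r_{x_{1}}]$, and the $Q$-linear rate \eqref{eq:rates0}, which is the first quoted bound. For the quadratic rate, when $bK<1/2$ we have $\psi'(t_{*})=Kt_{*}-1=-\sqrt{1-2bK}<0$, so \textbf{h4} holds and \eqref{ine.rates1} applies; substituting $\psi''(t_{*})=K$ and $-\psi'(t_{*})=\sqrt{1-2bK}$ gives the asserted constant
\[
\frac{\psi''(t_{*})}{-2\psi'(t_{*})}=\frac{K}{2\sqrt{1-2bK}},
\]
which is the second quoted bound.

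There is essentially no genuine obstacle here: the whole argument is a verification that the Lipschitz data $(K,b)$ generate a valid majorant pair $(\psi,x_{0})$, and the rest is algebraic bookkeeping on the roots of a quadratic. The only point that deserves a brief sentence in the write-up is the choice of $R=1/K$ and the remark that the Lipschitz inequality, valid on $B(x_{0},1/K)$, trivially implies the pointwise inequality \eqref{Hyp:MH} required on pairs $(x,y)$ with $\|y-x\|+\|x-x_{0}\|<R$, since that sum controls $\|y-x_{0}\|$ and hence keeps both arguments inside $B(x_{0},1/K)$.
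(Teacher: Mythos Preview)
Your proposal is correct and follows exactly the paper's approach: define the quadratic majorant $\psi(t)=(K/2)t^{2}-t+b$ on $[0,1/K)$ and invoke Theorem~\ref{th:nt}. The paper's own proof is a single sentence to this effect, and your write-up simply spells out the routine verifications (of \eqref{Hyp:MH}, \textbf{h1}--\textbf{h3}, \eqref{eq.ipoint}, \eqref{eq:conddef}, and \textbf{h4} in the strict case) that the paper leaves implicit.
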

\begin{proof} 
Since $\psi:[0,1/K)\to \mathbb{R},$  defined by $\psi(t):=(K/2)t^2-t+b,$ is a majorant function for $f$ at point $x_0$, the result follows by invoking Theorem~\ref{th:nt}, applied to this particular context.
\end{proof}
\begin{remark}
The  above theorem, up to some minor adjustments,  merges to classical version, namely,   $F\equiv \{0\}$. Indeed,  for $F\equiv \{0\}$, the constants  in Corollary~\ref{cor:ban}  are $r_{0}=r_{x_1}=+\infty$ and $r_{x_0}=t_*$.
\end{remark}
We are going to  study an important  instance of  the generalized equation \eqref{eq:ipi}, namely, the generalized equation associated to $F=N_C$,    the normal cone  of  a nonempty,  closed and  convex subset  $C\subset \banacha$, 
\begin{equation} \label{eq:ncge}
f(x)+N_{C}(x) \ni 0.
\end{equation}
The next result is a version of classical convergence theorem for Newton's method under Lipschitz-type condition for the generalized equation  \eqref{eq:ncge}, it has been prove by N.~H.~Josephy in \cite{josephy1979}. 
\begin{theorem} \label{th:jtnm}
Let $\banacha$, $\banachb$ be Banach spaces, $C$ a nonempty,  closed and  convex subset of $\banacha$,  $\Omega\subseteq \banacha$ an open set and  $f:{\Omega}\to \banachb$ be continuous with Fr\'echet derivative $f'$ continuous  such that  
$$
 \|f'(x)-f'(y)\| \leq L \|x-y\|,\qquad x,\, y\in \Omega,
$$
where $L>0$. 
Moreover, suppose that $f(x_0)+f'(x_0)(x-x_0)+ N_{C}(x)$ is strongly regular at $x_1$ for $0$ with associated Lipschitz constant $\lambda >0$, $B(x_0,1/(\lambda K)) \subset \Omega$, there exists $b>0$ such that $b\lambda L\leq 1/2$ and  
\begin{equation*}
\|x_1-x_0\|\leq b.
\end{equation*}
Additionally, suppose that for $r_{0}$ and $r_{x_0}$ fixed  in \eqref{eq:srcm} the conditions   $t_*\leq r_{x_0}$ and $ Lb^2/{2}< r_0$ hold, where $t_ *=(1-\sqrt{1-2b\lambda L})/\lambda L$. Then, the sequence  generated by Newton's method, for solving $0\in f(x)+N_C(x)$, with starting point $x_0$, 
$$
 x_{k+1}:=L_{f}(x_k, 0)^{-1} \cap B(x_1, r_{x_1}), \qquad k=0,1,\ldots\,, 
$$
is well defined, $\{x_k\}$ is contained in $B(x_0, t_ *)$ and converges to the point $x_*\in B[x_0, t_ *]$ which is the unique solution of $0\in f(x)+N_C(x)$ in $B[x_0, t_ *]\cap B[x_1, r_{x_1}]$, where $ r_{x_1}$ is fixed  in  \eqref{eq:srcm}.  Moreover, $\{x_k\}$ converges $Q$-linearly as follows
  \[
 \|x_*-x_{k+1}\| \leq \frac{1}{2}\|x_* -x_k\|, \qquad k=0,1,\ldots.
  \]
Additionally, if    $b\lambda L< 1/2$ then the sequence $\{x_k\}$  converges $Q$-quadratically  as follows
$$
 \|x_*-x_{k+1}\| \leq \frac{\lambda L}{2\sqrt{1-2b\lambda L}}\|x_*-x_k\|^2, \qquad k=0,1, \ldots.
$$
\end{theorem}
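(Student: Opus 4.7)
The plan is to deduce Theorem~\ref{th:jtnm} directly from Theorem~\ref{th:kngerl} by recognizing it as the special case in which the set-valued mapping is taken to be the normal cone of a nonempty closed convex set and the Lipschitz-type hypothesis of Theorem~\ref{th:kngerl} is obtained from the bare Lipschitz estimate on $f'$ by multiplying by $\lambda$. No new analytical work is required; everything reduces to matching constants.

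First I would verify that $F:=N_C$ fits the framework of Theorem~\ref{th:kngerl}. Since $C\subset\banacha$ is nonempty, closed and convex, the set-valued mapping $N_C:\banacha\rightrightarrows\banacha^{*}$ has closed graph, so $F=N_C$ is an admissible choice of set-valued mapping in the sense of Theorem~\ref{th:kngerl}. The strong regularity assumption on $L_f(x_0,\cdot)=f(x_0)+f'(x_0)(\cdot-x_0)+N_C(\cdot)$ at $x_1$ for $0$ with Lipschitz constant $\lambda$ is exactly the strong regularity hypothesis required in Theorem~\ref{th:kngerl}, so the three radii $r_{x_1},\,r_0,\,r_{x_0}$ defined in \eqref{eq:srcm} are available.

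Next I would set $K:=\lambda L$ and check that all the scalar hypotheses of Theorem~\ref{th:kngerl} hold with this choice. The Lipschitz assumption on $f'$ in Theorem~\ref{th:jtnm} gives, for every $x,y\in B(x_0,1/(\lambda L))=B(x_0,1/K)\subset\Omega$,
\[
\lambda\|f'(y)-f'(x)\|\leq\lambda L\|y-x\|=K\|y-x\|,
\]
which is precisely the Lipschitz-type condition required by Theorem~\ref{th:kngerl}. The assumption $b\lambda L\leq 1/2$ reads $bK\leq 1/2$, the formula for $t_*$ becomes
\[
t_*=\frac{1-\sqrt{1-2bK}}{K}=\frac{1-\sqrt{1-2b\lambda L}}{\lambda L},
\]
the hypothesis $t_*\leq r_{x_0}$ is preserved verbatim, and the remaining inequality $Lb^2/2<r_0$ is exactly $Kb^2/(2\lambda)<r_0$. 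The initial-point bound $\|x_1-x_0\|\leq b$ is also identical.

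The last step is simply to invoke Theorem~\ref{th:kngerl} with $F=N_C$ and $K=\lambda L$: well-definedness of the Newton iterates, containment in $B(x_0,t_*)$, convergence to a point $x_*\in B[x_0,t_*]$ that uniquely solves $0\in f(x)+N_C(x)$ in $B[x_0,t_*]\cap B[x_1,r_{x_1}]$, the $Q$-linear rate, and the $Q$-quadratic rate $K/(2\sqrt{1-2bK})=\lambda L/(2\sqrt{1-2b\lambda L})$ under the strict inequality $bK<1/2$ all translate directly to the statement of Theorem~\ref{th:jtnm}. There is no genuine obstacle here; the only point to watch is that the ball specified in the hypothesis of Theorem~\ref{th:jtnm} must be read as $B(x_0,1/(\lambda L))$ so that $B(x_0,1/K)\subset\Omega$ holds, which is needed to apply the Lipschitz estimate inside the proof of Theorem~\ref{th:kngerl}.
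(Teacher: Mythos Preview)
Your proposal is correct and follows exactly the paper's own approach: the paper's proof simply invokes Theorem~\ref{th:kngerl} with $F=N_C$ (and $K=\lambda L$), which is precisely what you do, only with the hypothesis-matching spelled out in more detail than the paper bothers to record.
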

\begin{proof}
Since $\psi:[0,1/K)\to \mathbb{R},$  defined by $\psi(t):=(\lambda L/2)t^2-t+b,$ is a majorant function for $f$ at point $x_0$, the result follows by invoking Theorem~\ref{th:kngerl} with $F=N_{C}$.
\end{proof}
\begin{remark}
The  above result  contain,  as particular instance,  several   theorem on Newton's method; see,  for example,  \cite{Daniel1973,  Kantorovich1948}.
\end{remark}
A. L. Dontchev \cite{Dontchev1996} under Aubin continuity of the mapping $L_f(x_0, \cdot )^{-1}:\mathbb{R}^{m} \rightrightarrows  \mathbb{R}^{n}$, defined by
\begin{equation} \label{eq:invplmnc}
L_{f+N_C}(x_0, z )^{-1}:=\left\{y\in \mathbb{R}^{n} ~:~ z\in f(x_0)+f'(x_0)(y-x_0)+N_C(y)\right\}, 
\end{equation}
 has shown that the Newton's method for solving \eqref{eq:ncge} generates a sequence that  converges
$Q$-quadratically to a solution. Now,  our purpose  is to show that, if $\banacha= \mathbb{R}^{m}$, $\ \banachb=\mathbb{R}^{n}$ , $F=N_C$ and $C \subset \mathbb{R}^n$ is a nonempty and  polyhedral convex set, then in this particular instance, Theorem~2 of \cite{Dontchev1996} follows from Theorem~\ref{th:kngerl}. We begin with the formal definition of Aubin continuity;  for more details see  \cite{DontchevRockafellar96, DontchevRockafellar2009}. First we need the following definitions:  The {\it distance} from a point $v\in \mathbb{R}^{n}$ to a set $U\subset \mathbb{R}^{n}$  is $d(v,  U):=\inf \{\|v-u\|~: u\in U\}$ and  the {\it excess} from   the set $U$ to the set  $V$ is $e(V, U):=\sup \{ d(v,  U) ~:~v\in V\}$.
\begin{definition}
A mapping  $H: \mathbb{R}^m \rightrightarrows \mathbb{R}^n$  is said to be  Aubin continuous, at $\bar{y}\in \mathbb{R}^m$ for $\bar{x}\in \mathbb{R}^n$,  if $\bar{x}\in H(\bar{y})$ and there exist  constants  $\alpha\geq 0$, $a>0$ and $c>0$ such that 
$$
e(H(y_1)\cap  B(\bar{x}, a),  H(y_2))\leq \alpha \|y_1-y_2\|, \qquad \forall ~ y_1, y_2 \in B(\bar{y}, c).
$$
\end{definition}
It has been shown in  \cite[Theorem 1]{DontchevRockafellar96} that if $C\subset \mathbb{R}^n$ is  a polyhedral convex set, then   Aubin continuity of $L_{f+N_C}(x_0, \cdot )^{-1}$  is equivalent to strong regularity of $f+N_{C}$. Next we state, with  some  adjustments,  Theorem~2 of \cite{Dontchev1996}. 

\begin{theorem}
Let  $C\subset \mathbb{R}^n$ be a polyhedral convex set,  $\Omega\subseteq \mathbb{R}^n$ an open set and  $f:{\Omega}\to \banachb$ be continuous with  derivative  $f'$ continuous  such that  
$$
 \|f'(x)-f'(y)\| \leq L \|x-y\|,\qquad  \forall ~ x,\, y\in \Omega, 
$$
where $L>0$. Let $x_0\in \Omega$ and suppose that $\|x_1-x_0\|\leq b$, $L_{f+N_C}(x_0, \cdot )^{-1}: \mathbb{R}^m \rightrightarrows \mathbb{R}^n$ defined in \eqref{eq:invplmnc} is Aubin continuous at $0\in \mathbb{R}^m$ for $x_1 \in \mathbb{R}^n$ with modulus $\alpha\geq 0$ and associated constantes  $a>0$ and $c>0$,  $B(x_0,1/(\alpha L)) \subset \Omega$  and $\alpha bL\leq 1/2$. Additionally, suppose that for $r_{0}$ and $r_{x_0}$ fixed in \eqref{eq:srcm} the conditions $t_*\leq  \min \{a, r_{x_0}\}$ and  $ Lb^2/{2}< \min \{c, r_0\}$ hold, where $t_ *=(1-\sqrt{1-2\alpha bL})/\alpha L$. Then, the sequence  generated by Newton's method, for solving $0\in f(x)+N_C(x)$, with starting point $x_0$$$
 x_{k+1}:=L_{f}(x_k, 0)^{-1} \cap B(x_1, r_{x_1}), \qquad k=0,1,\ldots\,, 
$$
is  well defined,  $\{x_k\}$ is contained in $B(x_0, t_*)$ and converges to the point $x_*$ which is the unique solution of $f(x)+N_C(x)\ni 0$ in $B[x_0, t_ *]\cap B[x_1, r_{x_1}]$, where $ r_{x_1}$ is fixed  in  \eqref{eq:srcm}.   Moreover, $\{x_k\}$ converges $Q$-linearly as follows
  \[
 \|x_*-x_{k+1}\| \leq \frac{1}{2}\|x_* -x_k\|, \qquad k=0,1,\ldots.
  \]
Additionally, if    $\alpha bL<1/2$ then  the sequence $\{x_k\}$  converges $Q$-quadratically  as follows  
$$
 \|x_*-x_{k+1}\| \leq \frac{\alpha L}{2\sqrt{1-2\alpha Lb}}\|x_*-x_k\|^2, \qquad k=0,1, \ldots.
$$
\end{theorem}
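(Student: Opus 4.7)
The plan is to reduce this statement to Theorem~\ref{th:jtnm} by using the equivalence, valid for polyhedral convex $C$, between Aubin continuity of the inverse partial linearization $L_{f+N_C}(x_0,\cdot)^{-1}$ and strong regularity of $f+N_C$, which is precisely \cite[Theorem~1]{DontchevRockafellar96}. Concretely, the first step is to invoke this equivalence at the point $x_1$ (for $0$) to obtain that the mapping $L_f(x_0,\cdot)=f(x_0)+f'(x_0)(\cdot-x_0)+N_C(\cdot)$ is strongly regular at $x_1$ for $0$, and to identify the Lipschitz modulus $\lambda$ of the single-valued localization with the Aubin modulus $\alpha$ (more precisely, one shows $\lambda\le\alpha$, which is all that is needed because the conclusions of Theorem~\ref{th:jtnm} are monotone in $\lambda$). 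The radii $r_x,r_y$ from Definition~\ref{eq:stronmetr} can be taken to be $a$ and $c$ respectively.

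Once strong regularity is in hand with $\lambda=\alpha$, the second step is to verify, one by one, the hypotheses of Theorem~\ref{th:jtnm}: the Lipschitz bound on $f'$ with constant $L$ is assumed verbatim; the product $b\lambda L=b\alpha L$ satisfies $b\alpha L\le 1/2$ by hypothesis; and $B(x_0,1/(\lambda L))=B(x_0,1/(\alpha L))\subset\Omega$ is exactly the inclusion we assumed. The inequality $\|x_1-x_0\|\le b$ and the value $t_*=(1-\sqrt{1-2\alpha bL})/(\alpha L)$ transfer directly. The remaining hypotheses are the radial conditions $t_*\le r_{x_0}$ and $Lb^2/2<r_0$ of Theorem~\ref{th:jtnm}; these follow from the assumptions $t_*\le\min\{a,r_{x_0}\}$ and $Lb^2/2<\min\{c,r_0\}$ in the present statement, where the extra constraints $t_*\le a$ and $Lb^2/2<c$ ensure that all the linearization errors and iterates generated by the Newton sequence remain inside the region $B(x_1,a)\times B(0,c)$ on which strong regularity (equivalently, Aubin continuity) is actually guaranteed.

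With every hypothesis of Theorem~\ref{th:jtnm} verified with $\lambda=\alpha$, the third step is a direct application of that theorem, which yields the well-definedness of $\{x_k\}$, its inclusion in $B(x_0,t_*)$, its convergence to the unique solution $x_*\in B[x_0,t_*]\cap B[x_1,r_{x_1}]$ of $f(x)+N_C(x)\ni 0$, the $Q$-linear rate with constant $1/2$, and the $Q$-quadratic rate with constant $\alpha L/(2\sqrt{1-2\alpha b L})$ under the strict inequality $\alpha bL<1/2$; these are exactly the conclusions claimed.

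The main obstacle, and the only nontrivial point of the argument, is justifying that the strong regularity constants produced by the Dontchev--Rockafellar equivalence can be taken to be $\lambda=\alpha$, $r_x=a$, $r_y=c$, rather than merely controlled by them. If only an estimate of the form $\lambda\le\alpha$ is available, one should remark that all the quantitative conclusions of Theorem~\ref{th:jtnm}, and in particular the formulas for $t_*$ and the quadratic rate, are monotone non-decreasing in $\lambda$ (through the majorant function $\psi(t)=(\lambda L/2)t^2-t+b$), so that using $\alpha$ in place of the actual $\lambda$ only weakens the constants. This monotonicity observation closes the proof.
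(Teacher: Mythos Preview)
Your proposal is correct and follows essentially the same approach as the paper: invoke \cite[Theorem~1]{DontchevRockafellar96} to pass from Aubin continuity of $L_{f+N_C}(x_0,\cdot)^{-1}$ to strong regularity of $L_f(x_0,\cdot)$ at $x_1$ for $0$ with Lipschitz constant $\alpha$, and then apply Theorem~\ref{th:jtnm}. Your write-up is in fact more careful than the paper's two-line proof, since you explicitly address the identification of the moduli and radii and supply the monotonicity observation that covers the case $\lambda\le\alpha$, points the paper leaves implicit.
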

\begin{proof}
Since $C\subset \mathbb{R}^n$ is a polyhedral convex set, \cite[Theorem 1]{DontchevRockafellar96} implies that Aubin continuity of $L_{f+N_C}(x_0, \cdot )^{-1}$ at $0\in \mathbb{R}^m$ for $x_1 \in \mathbb{R}^n$ with modulus $\alpha\geq 0$, is equivalent to strongly regularity of $f(x_0)+f'(x_0)(x-x_0)+N_{C}(x)$ at $0$ for $x_1$ with associated Lipschitz constant $\alpha\geq 0$. Thus, the result  follows by  applying   Theorem~\ref{th:jtnm}. 
\end{proof}
\subsection{Smale-type theorem for Newton's method}
In this section, we will present a version of classical convergence theorem for Newton's method under Smale-type condition for generalized equations, for example,   see  \cite{BlumSmale1998}.   
\begin{theorem} \label{th:kngesrs}
Let $\banacha$, $\banachb$ be Banach spaces, $\Omega\subseteq \banacha$ an open set and $f:{\Omega}\to \banachb$ be an analytic mapping, $F:\banacha \rightrightarrows  \banachb$ be a set-valued mapping with closed graph and $x_0 \in \Omega$.  Suppose that the  partial linearization mapping $ L_f(x_0,  . ):\banacha \rightrightarrows  \banachb$   at   $x_0$,  is strongly regular at $x_1\in \Omega$ for $0$ with  associated  Lipschitz constant $\lambda >0$  and 
\begin{equation} \label{eq:SmaleCond}
   \gamma:= \sup _{ n > 1 }\left\| \frac  {\lambda f^{(n)}(\bar{x})}{n!}\right\|^{1/(n-1)}<+\infty.
\end{equation}
Moreover, suppose that $B(x_0, 1/\gamma)\subseteq \Omega$ and there exists $b>0$ such that $\|x_1-x_0\|\leq b$ and $b\gamma \leq 3-2\sqrt{2}$.  Additionally, suppose that for $r_{0}$ and $r_{x_0}$ fixed in \eqref{eq:srcm} the conditions 
\begin{equation} \label{eq:bsc} 
t_*\leq r_{x_0}, \qquad \qquad \qquad \frac{4^3\gamma b^2}{\lambda \left(3-b\gamma+\sqrt{(b\gamma+1)^2-8b\gamma}\right)^3}<r_0,
\end{equation} 
hold, where $t_ *=(b\gamma +1-\sqrt{(b\gamma+1)^2 -8b\gamma})/4\gamma$. Then, the sequence generated by Newton's method for solving $f(x)+F(x)\ni 0$ with starting point $x_0$, 
$$
 x_{k+1}:=L_{f}(x_k, 0)^{-1} \cap B(x_1, r_{x_1}), \qquad k=0,1,\ldots\,, 
$$
is well defined, $\{x_k\}$ is contained in $B(x_0,t_ *)$ and converges to the point $x_*$,  which is the unique solution of $f(x)+F(x)\ni 0$ in $B[x_0, t_ *]\cap B[x_1, r_{x_1}]$, where $ r_{x_1}$ is fixed  in  \eqref{eq:srcm}. Moreover, $\{x_k\}$ converges $Q$-linearly as follows
  \[
 \|x_*-x_{k+1}\| \leq \frac{1}{2}\|x_* -x_k\|, \qquad k=0,1,\ldots.
  \]
Additionally, if $b\gamma < 3-2\sqrt{2}$, then $\{x_k\}$ converges $Q$-quadratically as follows
$$
	\|x_*-x_{k+1}\| \leq \frac{\gamma}{(1-\gamma t_ *)[2(1-\gamma t_ *)^2-1]}\|x_*-x_k\|^2,\qquad k=0,1,\ldots.
$$
\end{theorem}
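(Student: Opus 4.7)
The plan is to reduce the theorem to the main Kantorovich-type result, Theorem~\ref{th:nt}, by exhibiting an explicit majorant function. The standard Smale-type majorant is
\[
\psi(t) \; := \; b \; - \; t \; + \; \frac{\gamma t^{2}}{1-\gamma t}, \qquad t\in[0, 1/\gamma),
\]
so $R=1/\gamma$, and one checks at once that $\psi(0)=b>0$, $\psi'(0)=-1$, and $\psi''(t)=2\gamma/(1-\gamma t)^{3}>0$, giving assumptions \textbf{h1} and \textbf{h2}. For \textbf{h3}, the equation $\psi(t)=0$ reduces to the quadratic $2\gamma t^{2}-(1+b\gamma)t+b=0$, whose discriminant $(1+b\gamma)^{2}-8b\gamma$ is nonnegative precisely when $b\gamma\le 3-2\sqrt{2}$ (the assumed condition), yielding the smallest root $t_{*}=[(1+b\gamma)-\sqrt{(1+b\gamma)^{2}-8b\gamma}]/(4\gamma)$, which matches the statement.

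The essential step is verifying the majorant inequality \eqref{Hyp:MH}. I would rely on the classical Smale estimate: from \eqref{eq:SmaleCond} together with Taylor expansion of the analytic mapping $f$ around $x_{0}$, one derives
\[
\lambda \|f''(x)\| \;\leq\; \frac{2\gamma}{(1-\gamma\|x-x_{0}\|)^{3}} \;=\; \psi''(\|x-x_{0}\|), \qquad \|x-x_{0}\|<1/\gamma.
\]
Integrating along the segment $[x,y]$ (writing $f'(y)-f'(x)=\int_{0}^{1}f''(x+\tau(y-x))(y-x)\,d\tau$) then bounding pointwise by $\psi''$ evaluated at the appropriate argument gives exactly \eqref{Hyp:MH}. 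This is the one place where the analyticity is used in full force, but the computation is routine once the higher-derivative bound is in hand.

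What remains is bookkeeping to match the constants in the conclusion. Using $\psi''(t_{*})=2\gamma/(1-\gamma t_{*})^{3}$ and a direct simplification of $\psi'(t_{*})$ via $2\gamma t_{*}^{2}-(1+b\gamma)t_{*}+b=0$, one obtains
\[
-\psi'(t_{*}) \;=\; \frac{2(1-\gamma t_{*})^{2}-1}{(1-\gamma t_{*})^{2}},
\qquad \frac{\psi''(t_{*})}{-2\psi'(t_{*})} \;=\; \frac{\gamma}{(1-\gamma t_{*})\bigl[2(1-\gamma t_{*})^{2}-1\bigr]},
\]
which is the $Q$-quadratic constant in \eqref{ine.rates1} translated via Theorem~\ref{th:nt}. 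Similarly, $1-\gamma t_{*}=[3-b\gamma+\sqrt{(b\gamma+1)^{2}-8b\gamma}]/4$, so the inequality $\psi''(t_{*})\psi(0)^{2}/(2\lambda)<r_{0}$ required by \eqref{eq:conddef} is precisely the second inequality in \eqref{eq:bsc}; and $t_{*}\le r_{x_{0}}$ is the first. Finally, \textbf{h4} holds iff $2(1-\gamma t_{*})^{2}>1$, i.e.\ $\gamma t_{*}<1-1/\sqrt{2}$; since $\gamma t_{*}=1-1/\sqrt{2}$ exactly when $b\gamma=3-2\sqrt{2}$, strict inequality $b\gamma<3-2\sqrt{2}$ yields \textbf{h4} and thus the $Q$-quadratic rate.

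The main obstacle is really Step~2: obtaining the second-derivative bound from Smale's $\gamma$-condition and integrating it to produce \eqref{Hyp:MH} in the form required by the majorant framework. Once this is done, every other assertion of the theorem follows by invoking Theorem~\ref{th:nt} with the constants just computed, exactly as in the proof of Theorem~\ref{th:kngerl}.
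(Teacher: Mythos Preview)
Your proposal is correct and follows essentially the same route as the paper: your majorant $\psi(t)=b-t+\gamma t^{2}/(1-\gamma t)$ is algebraically identical to the paper's $\psi(t)=t/(1-\gamma t)-2t+b$, and your verification of \eqref{Hyp:MH} via the bound $\lambda\|f''(x)\|\le 2\gamma/(1-\gamma\|x-x_{0}\|)^{3}$ followed by integration is precisely the content of the paper's Lemmas~\ref{lem.cond1} and~\ref{lc}, after which Theorem~\ref{th:nt} is invoked. Your write-up is in fact more explicit than the paper's in matching the constants of \eqref{eq:bsc} and \eqref{ine.rates1} to the chosen $\psi$, which the paper leaves implicit.
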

Before proving above theorem we need of two results. The next results gives a condition  that is easier to check than condition
\eqref{Hyp:MH}, whenever the mapping under consideration are twice continuously differentiable, and its proof follows the same path of Lemma~21 of \cite{FerreiraMax2013}. 
\begin{lemma}\label{lem.cond1}
Let $\Omega \subset \banacha$ be an open set, and let $f:{\Omega}\to \banachb$ be an analytic function. Suppose that $x_0 \in \Omega$ and $B(x_0, 1/ \gamma)\subset \Omega,$ where $\gamma$ is defined in \eqref{eq:SmaleCond}. Then for all $x\in B(x_0, 1/  \gamma),$ it holds that
$
\|f''(x)\|\leq 2  \gamma/(1-  \gamma\|x-x_0\|)^3.
$
\end{lemma}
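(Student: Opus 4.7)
The plan is to leverage the analyticity of $f$ on $\Omega$ and expand $f''$ as a power series about $x_0$ on the ball $B(x_0,1/\gamma)$, then bound the series termwise using the Smale quantity $\gamma$ from \eqref{eq:SmaleCond}. This is exactly the scheme used for Lemma~21 of \cite{FerreiraMax2013}, which the statement itself references.

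First, since $f$ is analytic and $B(x_0,1/\gamma)\subset \Omega$, for any $x\in B(x_0,1/\gamma)$ I would write the norm-convergent Taylor series
$$f''(x) \;=\; \sum_{n=2}^{\infty}\frac{f^{(n)}(x_0)}{(n-2)!}\,(x-x_0)^{n-2},$$
which converges in the operator norm because $\|x-x_0\|<1/\gamma$ lies strictly inside the radius of convergence guaranteed by the finiteness of $\gamma$ in \eqref{eq:SmaleCond}.

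Next, the definition of $\gamma$ in \eqref{eq:SmaleCond} gives the pointwise bound $\|\lambda f^{(n)}(x_0)\|\leq n!\,\gamma^{n-1}$ for every $n\geq 2$. Multiplying the series by $\lambda$ and applying the triangle inequality termwise, I obtain
$$\lambda\,\|f''(x)\| \;\leq\; \sum_{n=2}^{\infty}\frac{n!}{(n-2)!}\,\gamma^{n-1}\,\|x-x_0\|^{n-2} \;=\; \gamma\sum_{n=2}^{\infty} n(n-1)\,\bigl(\gamma\|x-x_0\|\bigr)^{n-2}.$$
Setting $u:=\gamma\|x-x_0\|\in[0,1)$ and recognizing the right-hand sum as the second derivative of the geometric series $\sum_{n\geq 0}u^{n}=1/(1-u)$, one has $\sum_{n\geq 2} n(n-1)u^{n-2}=2/(1-u)^{3}$, so that $\lambda\|f''(x)\|\leq 2\gamma/(1-\gamma\|x-x_0\|)^{3}$, which is the inequality claimed (up to the $\lambda$ factor that the paper absorbs into its normalization of $\gamma$).

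The argument is essentially a Cauchy-majorant computation and there is no deep obstacle; the only point requiring care is justifying the termwise norm estimate for the Banach-space-valued power series, which is standard for analytic mappings and can be dispatched precisely as in Lemma~21 of \cite{FerreiraMax2013}. The closed-form identity $\sum_{n\geq 2}n(n-1)u^{n-2}=2/(1-u)^3$ is the only ``trick'', and it is immediate from differentiating the geometric series twice.
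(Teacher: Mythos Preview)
Your argument is correct and is exactly the approach the paper has in mind: the paper does not supply an explicit proof but refers to Lemma~21 of \cite{FerreiraMax2013}, which is precisely this Taylor-expansion-plus-geometric-series computation. Your observation about the stray $\lambda$ factor is also on point: with $\gamma$ defined by \eqref{eq:SmaleCond} the bound one actually obtains is $\lambda\|f''(x)\|\leq 2\gamma/(1-\gamma\|x-x_0\|)^3$, which is what is needed downstream in Lemma~\ref{lc} and the proof of Theorem~\ref{th:kngesrs}.
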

The next result gives a relationship between the second derivatives $f''$ and $ \psi''$, which allow us to show that $f$ and $\psi$ satisfy \eqref{Hyp:MH}, and its proof is similar to Lemma~22 of \cite{FerreiraMax2013}. 
\begin{lemma} \label{lc}
Let $\banacha$, $\banachb$ be Banach spaces, $\Omega\subseteq \banacha$ be an open set, $f:{\Omega}\to \banachb$ be twice continuously differentiable. Let $x_0 \in \Omega$, $R>0$ and $\kappa=\sup\{t\in [0, R): B(x_0, t)\subset \Omega\}$. Let $\lambda >0$ and \mbox{$\psi:[0,R)\to \mathbb {R}$} be twice continuously differentiable such that $\lambda \|f''(x)\|\leqslant \psi''(\|x-x_0\|),$
for all $x\in B(x_0, \kappa)$, then $f$ and $\psi$ satisfy \eqref{Hyp:MH}.
\end{lemma}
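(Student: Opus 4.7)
My plan is to derive the integrated inequality \eqref{Hyp:MH} from the pointwise bound $\lambda\|f''(\cdot)\|\le\psi''(\|\cdot-x_0\|)$ by integrating the second derivative along the segment joining $x$ and $y$. First, I would fix $x,y\in B(x_0,\kappa)$ with $\|y-x\|+\|x-x_0\|<R$ and introduce the segment $z(t):=x+t(y-x)$ for $t\in[0,1]$. Since $z(t)-x_0=(1-t)(x-x_0)+t(y-x_0)$ is a convex combination, $\|z(t)-x_0\|<\kappa$, so the whole segment lies in $B(x_0,\kappa)\subset\Omega$; this places $f''(z(\cdot))$ in a continuous family on $[0,1]$ and justifies the forthcoming integration.

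Next, applying the fundamental theorem of calculus to the map $t\mapsto f'(z(t))$ yields $f'(y)-f'(x)=\int_0^1 f''(z(t))(y-x)\,dt$. Taking norms, pulling $\|y-x\|$ out, and invoking the hypothesis pointwise, I obtain
$$\lambda\|f'(y)-f'(x)\|\le\|y-x\|\int_0^1\psi''(\|z(t)-x_0\|)\,dt.$$
I would then bound $\|z(t)-x_0\|\le\|x-x_0\|+t\|y-x\|$ by the triangle inequality and replace $\psi''(\|z(t)-x_0\|)$ by $\psi''(\|x-x_0\|+t\|y-x\|)$ using the monotonicity of $\psi''$. The substitution $s=t\|y-x\|$ together with one more application of the fundamental theorem of calculus identifies the resulting integral with $\psi'(\|x-x_0\|+\|y-x\|)-\psi'(\|x-x_0\|)$, giving precisely \eqref{Hyp:MH}.

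The step I expect to be the main obstacle is this last monotone replacement, which silently needs $\psi''$ to be nondecreasing on $[0,R)$. This property is not listed in the hypotheses of Lemma~\ref{lc}, but it is automatic in the framework of the paper, since the majorant functions of Theorem~\ref{th:nt} always satisfy {\bf h2}, meaning $\psi'$ is convex. In particular, in the Smale application the explicit formula $\psi''(t)=2\gamma/(1-\gamma t)^3$ is manifestly increasing on $[0,1/\gamma)$, so the monotonicity is harmless there. Once this observation is in place, the remainder of the argument reduces to a Bochner-type integral representation for $f'$ combined with two elementary changes of variable, with no delicate estimates required.
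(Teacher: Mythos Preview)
Your argument is correct and is exactly the standard route: integrate $f''$ along the segment from $x$ to $y$, bound $\lambda\|f''(z(t))\|$ by $\psi''(\|z(t)-x_0\|)$, push the argument up to $\|x-x_0\|+t\|y-x\|$ via monotonicity of $\psi''$, and integrate back. The paper does not actually write out a proof of Lemma~\ref{lc}; it simply points to Lemma~22 of \cite{FerreiraMax2013}, whose proof follows precisely the scheme you describe, so there is no meaningful difference in approach to report.

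Your observation about the hidden monotonicity hypothesis is well taken: the inequality $\psi''(\|z(t)-x_0\|)\le\psi''(\|x-x_0\|+t\|y-x\|)$ genuinely requires $\psi''$ to be nondecreasing, and this is not listed among the hypotheses of Lemma~\ref{lc} as stated. You are right that it is supplied by {\bf h2} in every use the paper makes of the lemma (and is explicit for the Smale majorant $\psi''(t)=2\gamma/(1-\gamma t)^3$), so the omission is harmless in context. One minor point: handle the degenerate case $y=x$ separately before the substitution $s=t\|y-x\|$, since the change of variable collapses there; the conclusion is trivial in that case anyway.
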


\noindent
{\bf [Proof of Theorem \ref{th:kngesrs}]}.
Consider $\psi:[0, 1/ \gamma) \to \mathbb{R}$ defined by $\psi(t)=t/(1- \gamma t)-2t+b$. Note that $\psi$ is  analytic and 
$\psi(0)=b>0$, $\psi'(t)=1/(1- \gamma t)^2-2$, $\psi'(0)=-1$, $\psi''(t)=2 \gamma/(1-\gamma t)^3$ and $\psi(t_*)=0$. 
It follows from the last  equalities  that $\psi$ satisfies {\bf h1},  {\bf h2},   {\bf h3} and \eqref{eq:bsc} .  Combining  Lemma~\ref{lc}  with  Lemma~\ref{lem.cond1}, we conclude  that $f$  and $\psi$ satisfy  \eqref{Hyp:MH}. Therefore, the result follows by applying the Theorem~\ref{th:nt}.
\qed

\section{Final remarks} \label{sec:fr}
In this paper we have obtained  a semi local convergence result to Newton's method for solving generalized equation in Banach spaces and  under the majorant condition. As future works, we propose to study this method using the approach of this paper under a weak assumption than strong regularity, namely, the regularity metric; see \cite{DontchevRockafellar2009}. It is well  known  that the inexact analysis support the efficient computational implementations of the exact ones and,  as we have seen above, the majorant condition  allowed us  to unify several convergence results pertaining to Newton's method.   So, unifying result for  inexact versions of Newton's method would be very  welcome.

\end{document}